\newtheorem{thm}{Theorem}
\newtheorem{prop}{Proposition}
\newtheorem{lem}{Lemma}
\newtheorem{cor}{Corollary}
\def \Nl {{\mathbbm N}}
\def \Zl {{\mathbbm Z}}
\def \Ql {{\mathbbm Q}}
\def \Rl {{\mathbbm R}}
\def \Cl {{\mathbbm C}}
\def \e {{\bf e}}
\def \vl {{\bf v}}
\def \J {{\mathbb J}}
\newcommand{\ob}[1]{\left(#1\right)}
\newcommand{\cb}[1]{\left\lbrace #1\right\rbrace}
\newcommand{\tb}[1]{\left[#1\right]}
\title{\it Laplacian State Transfer on Graphs with an Edge Perturbation Between Twin Vertices}
\author[1]{Hiranmoy Pal\footnote{E-mail: palh@nitrkl.ac.in}}
\affil[1]{National Institute of Technology Rourkela, India-769008.}
\date{\today}
\begin{document}

\maketitle

%%%%%%%%%%% Abstract %%%%%%%%%%%

\begin{abstract}
We consider quantum state transfer relative to the Laplacian matrix of a graph. Let $N(u)$ denote the set of all neighbors of a vertex $u$ in a graph $G$. A pair of vertices $u$ and $v$ are called twin vertices of $G$ provided $N\ob{u}\setminus\cb{v}=N\ob{v}\setminus\cb{u}$. We investigate the existence of quantum state transfer between a pair of twin vertices in a graph when the edge between the vertices is perturbed. We find that removal of any set of pairwise non-adjacent edges from a complete graph with a number of vertices divisible by $4$ results Laplacian perfect state transfer (or LPST) at $\frac{\pi}{2}$ between the end vertices of every edge removed. Further, we show that all Laplacian integral graphs with a pair of twin vertices exhibit LPST when the edge between the vertices is perturbed. In contrast, we conclude that LPST can be achieved in every complete graph between the end vertices of any number of suitably perturbed non-adjacent edges. The results are further generalized to obtain a family of edge perturbed circulant graphs exhibiting Laplacian pretty good state transfer (or LPGST) between twin vertices. A subfamily of which is also identified to admit LPST at $\frac{\pi}{2}$.\\

{\it Keywords:} Perfect state transfer, Pretty good state transfer, Circulant Graph, Spectra of graphs.\\

{\it MSC: 05C50, 15A16, 81P40.}
\end{abstract}

%%%%%%%%%%% Introduction %%%%%%%%%%%

\section{Introduction}
Continuous-time quantum walks, initially used by Farhi and Gutmann in \cite{farhi}, plays an important role in analysing various quantum transportation phenomena. Quantum state transfer is one such phenomenon where the states of physical systems are transferred between two points in a quantum-network. Several models are studied for efficient information processing, a good account of such studies on quantum-network engineering can be found in \cite{nik}. Let there be a quantum-network modelled by a graph $G$ with vertex set $\cb{u_1,u_2,\ldots,u_n}$. The adjacency matrix $A=\tb{a_{ij}}$ relative to $G$ is an $n\times n$ matrix with $a_{ij}=1$ if there is an edge between $u_i$ and $u_j$, otherwise $a_{ij}=0$. The degree matrix $D$ of $G$ is an $n\times n$ diagonal matrix indexed by the usual ordering of vertices, where the diagonal entries are the degree of the corresponding vertices. The positive semidefinite matrix $L=D-A$ is known as the Laplacian matrix of $G$ (see \cite{god0}). Unless otherwise stated, we consider all graphs to be simple, undirected and connected. A continuous-time quantum walk on a graph $G$ relative to the Laplacian matrix $L$ is defined by the unitary matrix
\begin{eqnarray}\label{eq00}
U_L(t):=\exp{\ob{-itL}}=\sum\limits_{k\geq 0}\frac{\ob{-it}^k}{k!}L^k,\text{ where } t\in\Rl \text{ and } i=\sqrt{-1}.
\end{eqnarray}
The transition matrix $U_L(t)$ describes the evolution of states in a quantum system in which the XYZ-interaction model (see \cite{bose1}) is adopted. However, in an XY-interaction model, the adjacency matrix of $G$ is considered in \eqref{eq00} instead of the Laplacian matrix when defining the continuous-time quantum walk. It is well known that for a regular graph the study of state transfer relative to adjacency matrix and Laplacian matrix are equivalent in a sense that both considerations provide the same information. Now we discuss few properties of the transition matrix. A graph $G$ is said to exhibit Laplacian perfect state transfer (LPST) between two distinct vertices $u_a$ and $u_b$ if there exists $\tau\in\Rl$ such that
\begin{equation}\label{e2}
U_L\ob{\tau}\e_a=\gamma\e_b,~\text{for some }\gamma\in\Cl,
\end{equation}
where $\e_a$ and $\e_b$ are the characteristic vectors corresponding to $u_a$ and $u_b$, respectively. The study of perfect state transfer in quantum spin networks was initiated by Bose \cite{bose}. We find that LPST occurs on several regular graphs, such as cubelike graphs \cite{ber, che}, integral circulant graphs \cite{mil4} and distance regular graphs \cite{cou2}, etc. The effect of Laplacian state transfer on few graph operations can be found in \cite{ack, alv, wang}. However, it is found in \cite{cou0} that there is no tree with more than two vertices exhibiting LPST. Remarkably, in \cite{krik1}, Kirkland et al. shown that the Laplacian quantum walk helps detect a faulty link or matchings in a complete graph. Few other results on LPST can be found in \cite{li,liu}. \par
In \eqref{e2}, if we have $a=b$ and $\tau\neq 0$ then $G$ is said to be periodic at the vertex $u_a$ at time $\tau$. A graph is called periodic if it is periodic at all vertices at the same time. The spectral decomposition (see \cite{horn}) of the transition matrix can be used to find an important class of periodic graphs. Suppose the Laplacian matrix $L$ has the distinct eigenvalues $\mu_1, \mu_2, \ldots, \mu_k$. If the corresponding projections onto the eigenspaces of $L$ are denoted by $E_1, E_2, \ldots, E_k$, then the spectral decomposition of the transition matrix can be obtained as
\begin{eqnarray}\label{eq01}
U_L(t)=\exp{\ob{-itL}}=\sum\limits_{j=1}^{k}\exp{\ob{-i\mu_j t}}E_j.
\end{eqnarray}
It is now evident that if all Laplacian eigenvalues of a graph $G$ are integers then $G$ is periodic at $2\pi$ (see \cite{god3}). Such graphs with integer eigenvalues are known as Laplacian integral graphs. An important family of Laplacian integral graphs are the complete graphs. A complete graph on $n$ vertices is denoted by $K_n$ where every pair of vertices are joined by an edge. We denote $I$ to be the identity matrix and $J$ to be the matrix with all entries equal to $1$, where the size of the matrices $I$ and $J$ are assumed to be clear from the context. The Laplacian matrix of $K_n$ is obtained by $L=nI-J$ which has the eigenvalues $0$ and $n$ with multiplicities $1$ and $n-1$, respectively. The corresponding projections onto the eigenspaces are $\frac{1}{n}J$ and $I-\frac{1}{n}J$. Therefore the spectral decomposition of the transition matrix of $K_n$ relative to the Laplacian can be evaluated as
\begin{eqnarray}\label{eq0}
U_L(t)=\frac{1}{n}J + \exp{\ob{-int}} \ob{I-\frac{1}{n}J}.
\end{eqnarray}  
If $u_a$ and $u_b$ are two distinct vertices of $K_n$ then
\[\abs{\e_b^T U_L\ob{t}\e_a} \leq \frac{2}{n},\] 
which implies that all complete graphs with more than three vertices never admit LPST. Similar arguments  has been presented in \cite{coug} to establish that $K_n,~n\geq 3$, does not exhibit perfect state transfer relative to the adjacency matrix. In contrast, the main conclusion in \cite{bose1} observes that the complete graph $K_{4n}$ with a missing edge exhibits LPST. This motivates us to study state transfer on graphs with an edge perturbation between twin vertices that attributes to Laplacian perfect state transfer.\par
We also find a class of edge perturbed circulant graphs exhibiting Laplacian pretty good state transfer which was introduced in \cite{god1}. A graph $G$ is said to exhibit Laplacian pretty good state transfer (LPGST) between two distinct vertices $u_a$ and $u_b$ if there is a sequence $\tau_k\in\Rl$ such that
\begin{equation}\label{eqc1}
\lim\limits_{k\to\infty}U_L\ob{\tau_k}\e_a=\gamma\e_b,~\text{for some }\gamma\in\Cl.
\end{equation}
In \eqref{eqc1}, if we have $a=b$ and $\tau_k \neq 0$ for all $k$, then $G$ is said to be almost periodic at $u_a$ with respect to the sequence $\tau_k$. The graph is said to be almost periodic if there is a sequence $\tau_k (\neq 0)\in\Rl$ such that
\[\lim\limits_{k\to\infty}U_L\ob{\tau_k}=\gamma I,~\text{for some }\gamma\in\Cl,\]
where $I$ is the identity matrix of appropriate order. Among regular graphs, we find a good number of circulant graphs graphs exhibiting pretty good state transfer and almost periodicity in \cite{pal6,pal7,pal4}. Now we briefly define circulant graphs and introduce some relevant notations for convenience. Let $\left(\Gamma,+\right)$ be a finite abelian group. A Cayley graph over $\Gamma$ with the connecting set $S$ satisfying $0\notin S\subseteq\Gamma$ and $\left\lbrace -s:s\in S\right\rbrace=S$ is denoted by $Cay\left(\Gamma,S\right)$. The entries of $\Gamma$ are the vertices of $Cay\left(\Gamma,S\right)$ where two vertices $a,b\in\Gamma$ are adjacent if and only if $a-b\in S$. In case $\Gamma=\Zl_n$, then the Cayley graph is known as circulant graph. Let $n\in\Nl$ and $D$ be a set consisting of proper divisors of $n$ then for $d\in D$, we define
\[S_n(d)=\left\lbrace x\in\Zl_n: gcd(x,n)=d\right\rbrace \text{ and } S_n(D)=\bigcup\limits_{d\in D} S_n(d).\]
The set $S_n(D)$ is called a gcd-set of $\Zl_n$. In \cite{so}, we find a complete characterization of integral circulant graphs as follows.
\begin{thm}\label{so}
A circulant graph $Cay\left(\Zl_n,S\right)$ is integral if and only if $S$ is a gcd-set.
\end{thm}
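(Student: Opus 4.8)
The plan is to work directly with the spectrum of a circulant graph. Recall that the (adjacency) eigenvalues of $Cay\ob{\Zl_n,S}$ are the character sums
\[
\lambda_j=\sum_{s\in S}\omega^{js},\qquad j=0,1,\ldots,n-1,
\]
where $\omega=\exp\ob{\tfrac{2\pi i}{n}}$ is a primitive $n$-th root of unity. Integrality of the graph amounts to requiring every $\lambda_j\in\Zl$. I would prove the two implications separately: the sufficiency of the gcd-set condition via Ramanujan sums, and its necessity via the Galois action on $\Ql\ob{\omega}$.

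For the sufficiency, suppose $S=S_n(D)=\bigcup_{d\in D}S_n(d)$, so that $\lambda_j=\sum_{d\in D}\sum_{x\in S_n(d)}\omega^{jx}$; it suffices to show each inner sum is an integer. Writing $x=dy$ with $\gcd\ob{y,\tfrac{n}{d}}=1$ identifies the inner sum with the Ramanujan sum
\[
\sum_{x\in S_n(d)}\omega^{jx}=\sum_{\substack{1\le y\le n/d\\ \gcd(y,n/d)=1}}\exp\ob{\tfrac{2\pi i\, jy}{n/d}}=c_{n/d}(j),
\]
a classical integer-valued arithmetic function. Hence each $\lambda_j$ is a finite sum of integers and the graph is integral.

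For the necessity, I would exploit that $\mathrm{Gal}\ob{\Ql\ob{\omega}/\Ql}\cong\Zl_n^{*}$, with the automorphism $\sigma_a$ sending $\omega\mapsto\omega^{a}$ for each unit $a$. Recovering the indicator of $S$ by the inverse transform $\mathbbm{1}_S(s)=\tfrac1n\sum_j\lambda_j\omega^{-js}$ and applying $\sigma_a$ gives, since each $\lambda_j$ is now assumed rational and therefore fixed,
\[
\mathbbm{1}_S(s)=\sigma_a\ob{\mathbbm{1}_S(s)}=\tfrac1n\sum_j\lambda_j\omega^{-jas}=\mathbbm{1}_S(as).
\]
Thus $s\in S$ if and only if $as\in S$ for every unit $a$, so $S$ is closed under multiplication by $\Zl_n^{*}$. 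The orbits of this action are exactly the sets $S_n(d)$ as $d$ ranges over the divisors of $n$, since a unit can carry any element of gcd $d$ to any other such element by the Chinese Remainder Theorem. Because $0\notin S$ excludes the orbit $S_n(n)=\cb0$, the set $S$ is a union of the $S_n(d)$ over proper divisors $d$, i.e.\ a gcd-set.

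The technical heart is the necessity direction. The main obstacle is to convert the analytic statement ``all $\lambda_j\in\Zl$'' into the combinatorial statement ``$S$ is a union of gcd-classes,'' and the cleanest bridge is the Galois-invariance identity above together with the identification of the multiplicative orbits of $\Zl_n$ with the gcd-classes $S_n(d)$. Verifying that orbit identification in full (the surjectivity furnished by the Chinese Remainder Theorem) is the one place needing genuine care; the remainder is bookkeeping.
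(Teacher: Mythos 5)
The paper offers no proof of this statement to compare against: Theorem \ref{so} is quoted verbatim from the reference \cite{so} as a known characterization of integral circulant graphs, and the author uses it only as a black box (to certify that $Cay\ob{\Zl_8,\cb{1,3,5,7}}$ is integral). So your proposal has to be judged on its own merits, and on those merits it is correct and complete in outline; it is in fact the standard proof of So's theorem. The sufficiency half is exactly right: the substitution $x=dy$ turns each gcd-class sum into the Ramanujan sum $c_{n/d}(j)$, whose integrality is classical (von Sterneck/H\"{o}lder, $c_q(j)=\sum_{d\mid\gcd(j,q)}\mu\ob{q/d}d$). The necessity half via Galois invariance of the inverse Fourier transform of the indicator is the cleanest known bridge, and your identity $\mathbbm{1}_S(s)=\mathbbm{1}_S(as)$ for units $a$ is derived correctly; note also that you only need rationality of the $\lambda_j$, which is automatic from integrality (and conversely rational eigenvalues are algebraic integers, hence integers, so the two formulations coincide).

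You correctly flagged the one step requiring genuine care: the identification of the orbits of $\Zl_n^{*}$ acting on $\Zl_n$ with the classes $S_n(d)$. The precise lemma needed is that the reduction map $\Zl_n^{*}\to\Zl_{n/d}^{*}$ is surjective, i.e.\ every unit modulo $n/d$ lifts to a unit modulo $n$; this follows by choosing, via the Chinese Remainder Theorem, a residue congruent to the given unit modulo the maximal divisor of $n$ sharing its prime support with $n/d$ and congruent to $1$ modulo the complementary prime-power part. With that lemma in hand, multiplication by units preserves $\gcd\ob{\cdot,n}$ and acts transitively on each $S_n(d)$, and excluding the orbit $\cb{0}=S_n(n)$ (since $0\notin S$) gives that $S$ is a gcd-set, as you argue. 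For completeness of the record: proofs in the literature of the necessity direction run either along your Galois-theoretic line or via linear-independence arguments for Ramanujan-sum vectors; either route is acceptable, and yours is self-contained modulo the lifting lemma above.
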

The cycle $C_n$ is a Cayley graph over $\Zl_n$ with $S=\left\lbrace1,n-1\right\rbrace$. The eigenvalues and eigenvectors of $C_n$ are well known. Suppose $\omega_n=\exp{\left(\frac{2\pi i}{n}\right)}$ is the primitive $n$-th root of unity. For $0\leq l\leq n-1$, let $\lambda_l$ be the eigenvalue of $C_n$ corresponding to the eigenvector $\vl_l.$ Then 
\begin{eqnarray}\label{IE3}
\lambda_l = 2\cos{\left(\frac{2l\pi}{n}\right)},\text{ and } \vl_l = \left[1,\omega_n^l,\ldots,\omega_n^{l(n-1)}\right]^T.
\end{eqnarray}
Now we give a brief introduction to Kronecker approximation theorem on simultaneous approximation of numbers, which plays an important role in characterizing state transfer on circulant graphs. 
\begin{thm}\cite{apo}[Kronecker approximation theorem]
If $\alpha_1,\ldots,\alpha_l$ are arbitrary real numbers and if $1,\theta_1,\ldots, \theta_l$ are real, algebraic numbers linearly independent over $\Ql$ then for $\epsilon>0$ there exist $q\in\Zl$ and $p_1,\ldots,p_l\in\Zl$ such that
\[\left|q\theta_j-p_j-\alpha_j\right|<\epsilon.\]
\end{thm}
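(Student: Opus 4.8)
The statement is the classical Kronecker approximation theorem, so rather than devising a new argument I would reconstruct the standard proof; the cleanest route I know proceeds through the duality theory of the torus. The plan is to reformulate the conclusion as a density statement on the compact abelian group $T=\ob{\Rl/\Zl}^l$. Define the homomorphism $\phi:\Zl\ra T$ by $\phi(q)=\ob{q\theta_1,\ldots,q\theta_l}+\Zl^l$, and let $H=\overline{\phi\ob{\Zl}}$ be the closure of its image, which is a closed subgroup of $T$. The asserted inequalities $\mb{q\theta_j-p_j-\alpha_j}<\epsilon$ say exactly that the point $\ob{\alpha_1,\ldots,\alpha_l}+\Zl^l$ lies within $\epsilon$ of $\phi\ob{\Zl}$ in the natural metric; hence it suffices to prove that $\phi\ob{\Zl}$ is dense in $T$, i.e. that $H=T$.

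To show $H=T$ I would argue by contradiction using characters. Every character of $T$ has the form $\chi_{\mathbf h}(x)=\exp\ob{2\pi i\,\langle\mathbf h,x\rangle}$ for some $\mathbf h=\ob{h_1,\ldots,h_l}\in\Zl^l$, and by Pontryagin duality a proper closed subgroup of a compact abelian group is annihilated by some nontrivial character. So if $H$ were proper there would be $\mathbf h\neq 0$ with $\chi_{\mathbf h}\equiv 1$ on $H$, and in particular on $\phi\ob{\Zl}$. Evaluating at $q=1$ gives $\exp\ob{2\pi i\ob{h_1\theta_1+\cdots+h_l\theta_l}}=1$, so that $h_1\theta_1+\cdots+h_l\theta_l=m$ for some $m\in\Zl$. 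This is a nontrivial rational linear relation $h_1\theta_1+\cdots+h_l\theta_l-m\cdot 1=0$ among $1,\theta_1,\ldots,\theta_l$, contradicting their linear independence over $\Ql$. Therefore $H=T$, the orbit is dense, and the approximation statement follows after translating by $\ob{\alpha_1,\ldots,\alpha_l}$.

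An equivalent and equally standard route would replace the duality step by Weyl's equidistribution criterion: the sequence $x_q=\ob{q\theta_1,\ldots,q\theta_l}$ is equidistributed in $T$ provided $\frac1N\sum_{q=1}^N\exp\ob{2\pi i\,\langle\mathbf h,x_q\rangle}\to 0$ for every $\mathbf h\neq 0$. For a fixed such $\mathbf h$ this is a geometric sum with ratio $\exp\ob{2\pi i t}$, where $t=h_1\theta_1+\cdots+h_l\theta_l$ is irrational by the same independence hypothesis; since $\exp\ob{2\pi i t}\neq 1$ the partial sums stay bounded and the Ces\`aro average is $O(1/N)$. Equidistribution implies density, which again yields the claim.

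I expect the genuine content to sit entirely in the step asserting that a proper closed subgroup of $T$ is killed by a nontrivial character (or, in the second route, in the implication from vanishing exponential sums to equidistribution, which rests on approximating continuous functions by trigonometric polynomials). Everything else, namely the translation of the inequalities into a density statement and the extraction of a $\Ql$-linear relation from a trivial character, is bookkeeping. I would also note that algebraicity of the $\theta_j$ plays no role in this argument: only the $\Ql$-linear independence of $1,\theta_1,\ldots,\theta_l$ is used, so the stated hypothesis is in fact stronger than necessary for the conclusion.
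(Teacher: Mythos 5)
The paper contains no proof of this statement for you to be compared against: the theorem is imported as a black box, cited from Apostol \cite{apo}, and is used later only through the approximation step \eqref{T4E1} in the proof of Theorem \ref{tm6}. Your reconstruction is a correct and standard proof. The translation of the inequalities into density of $H=\overline{\phi\ob{\Zl}}$ in $\ob{\Rl/\Zl}^l$ is exact (the sup-metric distance from $\phi(q)$ to $\ob{\alpha_1,\ldots,\alpha_l}+\Zl^l$ is $\max_j\min_{p_j\in\Zl}\mb{q\theta_j-p_j-\alpha_j}$), the annihilator step is the correct use of Pontryagin duality, and the evaluation at $q=1$ yields $h_1\theta_1+\cdots+h_l\theta_l=m\in\Zl$ with $\mathbf{h}\neq 0$, which is a nontrivial $\Ql$-relation among $1,\theta_1,\ldots,\theta_l$ precisely because $1$ is included in the independent family --- this is the essential point, since with only $\theta_1,\ldots,\theta_l$ independent the orbit closure can be a proper subgroup (take $l=1$, $\theta_1=\tfrac{1}{2}$). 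Your Weyl-criterion variant is equally sound: $t=h_1\theta_1+\cdots+h_l\theta_l$ is irrational by the same hypothesis, so the geometric sums are bounded and the Ces\`aro averages vanish, giving equidistribution and hence density (indeed with $q\in\Nl$, slightly stronger than the stated $q\in\Zl$). Your closing observation is also accurate and worth recording: algebraicity of the $\theta_j$ plays no role, and Apostol proves the theorem for arbitrary reals; the hypothesis lingers in this literature only because the intended applications are to graph eigenvalues, which happen to be algebraic, and the use made of the theorem in Theorem \ref{tm6} requires nothing beyond the version you proved.
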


In the following sections we investigate Lpalacian state transfer in edge perturbed graphs.

\section{LPST on Edge Perturbed Graphs}

Let $G$ be a graph with vertex set $\cb{u_1,u_2,\ldots,u_n}$, and let the Laplacian matrix of $G$ be $L$. If $u_a$ and $u_b$ are two distinct non-adjacent vertices of $G$ then Laplacian matrix of $G+\cb{u_a,u_b}$, where $u_a$ and $u_b$ are joined by a new edge, becomes
\[L^{+}=L+\ob{\e_a-\e_b}\ob{\e_a-\e_b}^T,\] 
where $\e_a$ and $\e_b$ are the characteristic vectors corresponding to $u_a$ and $u_b$, respectively. Again, if $u_a$ and $u_b$ are two adjacent vertices of $G$ then the Laplacian matrix of the edge perturbed graph $G-\cb{u_a,u_b}$ is
\[L^{-}=L-\ob{\e_a-\e_b}\ob{\e_a-\e_b}^T.\]
Observe that $L^{+}$ and $L^{-}$ are perturbations of the Laplacian matrix $L$ with the rank one matrix $M=\ob{\e_a-\e_b}\ob{\e_a-\e_b}^T$. Here we consider a more general perturbation
\[L^{\alpha}=L+\alpha M,~\alpha\in\Rl.\]
The matrix $L^{\alpha}$ can be realised as the Laplacian matrix of $G+\alpha\cb{u_a,u_b}$ where the weight of the edge between $u_a$ and $u_b$ in $G$ is increased by $\alpha$.
Let $N(u)$ denote the set of all neighbors of a vertex $u$ in $G$. We prove the following result mentioned in \cite{so1}. Notably, the condition in the following Lemma guarantees the existence of an automorphism of the graph $G$ swapping $u_a$, $u_b$ and fixing other vertices.
\begin{lem}\label{lm1}
If $N\ob{u_a}\setminus\cb{u_b}=N\ob{u_b}\setminus\cb{u_a}$ then the matrices $L$ and $M$ commute.
\end{lem}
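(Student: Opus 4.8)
The plan is to show that the single vector $w:=\e_a-\e_b$ is an eigenvector of $L$; once this is established, commutativity of $L$ and $M=ww^T$ is immediate. So the whole argument reduces to one spectral observation about the twin hypothesis, and the rest is a one-line consequence of the symmetry of $L$.

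First I would extract the consequence of the hypothesis for degrees. Writing $k=\mb{N\ob{u_a}\setminus\cb{u_b}}=\mb{N\ob{u_b}\setminus\cb{u_a}}$ for the common number of shared neighbors, one checks directly that $u_a$ and $u_b$ have equal degree: if $u_a\sim u_b$ then each degree is $k+1$, while if $u_a\not\sim u_b$ then each degree is $k$. Denote this common value by $d$. Next I would compute $Lw=L\e_a-L\e_b$ entrywise, recalling that the $a$-th column of $L=D-A$ carries the degree $d$ in position $a$, the value $-1$ in each position indexed by a neighbor of $u_a$, and $0$ elsewhere. For an index $j\notin\cb{a,b}$ the twin condition gives $u_j\sim u_a \iff u_j\sim u_b$, so the two columns contribute equal entries and $\ob{Lw}_j=0$. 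Evaluating the $a$- and $b$-entries separately (splitting into the adjacent and non-adjacent cases) then yields $Lw=\lambda w$, where $\lambda=d+1$ when $u_a\sim u_b$ and $\lambda=d$ otherwise.

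With $w$ an eigenvector of $L$, the conclusion follows: $LM=Lww^T=\ob{\lambda w}w^T=\lambda M$, while using $L^T=L$ one has $w^T L=\ob{Lw}^T=\lambda w^T$, so that $ML=ww^TL=\lambda ww^T=\lambda M$. Hence $LM=\lambda M=ML$, as claimed.

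The only genuinely delicate point is the bookkeeping that forces $\deg\ob{u_a}=\deg\ob{u_b}$ and the separate treatment of the two diagonal entries in the adjacent versus non-adjacent cases; everything else is routine. I would also record, in line with the remark preceding the statement, the more conceptual route that explains \emph{why} $w$ is an eigenvector: the twin condition yields the transposition automorphism $\sigma$ swapping $u_a,u_b$ and fixing the remaining vertices, whose permutation matrix $P$ commutes with $L$ and satisfies $Pw=-w$. Since the $\ob{-1}$-eigenspace of $P$ is the one-dimensional span of $w$ and $L$ preserves the eigenspaces of $P$, the line $\text{span}\ob{w}$ is $L$-invariant, i.e.\ $w$ is an eigenvector of $L$; this reproves the key step without the case analysis.
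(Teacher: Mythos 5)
Your proof is correct, but it takes a genuinely different route from the paper. The paper verifies commutativity entrywise: it compares $\e_x^T LM\e_y$ with $\e_x^T ML\e_y$ for the various cases of indices in or outside $\cb{a,b}$, using the symmetry of $L$, the equality $\e_a^T L\e_a=\e_b^T L\e_b$ of degrees, and the twin condition to kill the off-block entries. You instead isolate the single structural fact that $w=\e_a-\e_b$ is an eigenvector of $L$ (with eigenvalue $d+1$ or $d$ according as $u_a\sim u_b$ or not), after which $LM=Lww^T=\lambda M$ and $ML=ww^T L=\ob{L w}^T$-transposed-into-place gives $\lambda M$ as well; your entrywise computation of $Lw$ and the degree bookkeeping check out, as does your alternative derivation via the transposition automorphism ($PL=LP$, $Pw=-w$, and the $\ob{-1}$-eigenspace of $P$ being $\mathrm{span}\ob{w}$), which makes rigorous the paper's own remark preceding the lemma. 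What each approach buys: the paper's check is fully elementary and self-contained, while yours is more informative --- it exhibits the eigenvalue explicitly and shows that the range of $M$ is $L$-invariant, which immediately explains the spectral effect of the perturbation $L^{\alpha}=L+\alpha M$ (since $Mw=2w$, the eigenvalue on $w$ shifts to $\lambda+2\alpha$ while every eigenvector of $L$ orthogonal to $w$ is untouched), a fact the paper later recovers only implicitly through Proposition \ref{tm}. No gaps; the one point worth stating explicitly in the automorphism variant is why $P$ commutes with $L$, namely that an automorphism's permutation matrix commutes with both $A$ and $D$.
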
   

\begin{proof} Enough to consider the following cases as the problem is symmetric in $a$ and $b$. Since the Laplacian is a symmetric matrix, we observe that
\[\e_a^T LM \e_a=\e_a^T L\ob{\e_a - \e_b}=\e_a^T L \e_a-\e_a^T L \e_b=\e_a^T L \e_a-\e_b^T L \e_a=\e_a^T ML \e_a.\]
Further, for any $k\notin\cb{a,b}$, we have $\e_k^T LM \e_k=0=\e_k^T ML \e_k.$ Now the degrees of $u_a$ and $u_b$ are equal, and therefore $\e_a^T L\e_a=\e_b^T L\e_b$. Hence we have
\begin{eqnarray*}
\e_a^T LM \e_b = \e_a^T L\ob{-e_a + \e_b}=-\e_a^T L e_a + \e_a^T L\e_b=\ob{\e_a^T - \e_b^T} L\e_b= \e_a^T ML \e_b.
\end{eqnarray*}
For an arbitrary $k\notin \cb{a,b}$, notice that $\e_a^T LM \e_k=0$. Since $N\ob{u_a}\setminus\cb{u_b}=N\ob{u_b}\setminus\cb{u_a},$ we have
\[\e_a^T ML \e_k = \ob{\e_a^T - \e_b^T} L\e_k=0.\]
This completes the proof.
\end{proof}
Now we determine the transition matrix of the edge perturbed graph as follows. Recall that $M=\ob{\e_a-\e_b}\ob{\e_a-\e_b}^T$ and therefore
\[M^2=\ob{\e_a-\e_b}\ob{\e_a-\e_b}^T\ob{\e_a-\e_b}\ob{\e_a-\e_b}^T=2\ob{\e_a-\e_b}\ob{\e_a-\e_b}^T=2M.\]
In general, we obtain $M^k=2^{k-1}M,$ whenever $k\geq 1.$ Observe that
\begin{eqnarray*}
\exp{\ob{-i\alpha tM}} &=& I+\sum\limits_{k\geq 1} \frac{\ob{-i\alpha t}^k}{k!}M^k\\
&=& I+\sum\limits_{k\geq 1} \frac{\ob{-i\alpha t}^k}{k!}2^{k-1}M\\
&=& I+\frac{1}{2}\ob{\exp{\ob{-2i\alpha t}}-1}M.
\end{eqnarray*}
If $L$ and $M$ commute then
\begin{eqnarray*}
\exp{\ob{-itL^{\alpha}}}=\exp{\ob{-it\ob{L+\alpha M}}} &=& \exp{\ob{-itL}}\exp{\ob{-i\alpha tM}}\\
&=&\exp{\ob{-itL}}\tb{I+\frac{1}{2}\ob{\exp{\ob{-2i\alpha t}}-1}M},
\end{eqnarray*}
which determines the transition matrix of the perturbed graph in terms of the transition matrix of the unperturbed graph. We incorporate the above observation as follows.
\begin{prop}\label{tm}
Let $G$ be a graph on $n$ vertices $u_1,u_2,\ldots,u_n$, having Laplacian matrix $L$. Suppose that $u_a$ and $u_b$ are two distinct vertices of $G$ with $N\ob{u_a}\setminus\cb{u_b}=N\ob{u_b}\setminus\cb{u_a}$. Then the transition matrix of the edge perturbed graph with Laplacian $L^{\alpha}=L+\alpha M$, where $M=\ob{\e_a-\e_b}\ob{\e_a-\e_b}^T$ and $\alpha\in\Rl$, is given by
\[U_{L^{\alpha}}(t)=U_{L}(t)\tb{I+\frac{1}{2}\ob{\exp{\ob{-2i\alpha t}}-1}M},\]
where $U_{L}(t)$ is the transition matrix of the unperturbed graph.
\end{prop}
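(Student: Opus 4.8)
The plan is to assemble the statement from three ingredients, two of which are already available in the surrounding text. First I would invoke Lemma~\ref{lm1} to record that the twin-vertex hypothesis $N\ob{u_a}\setminus\cb{u_b}=N\ob{u_b}\setminus\cb{u_a}$ forces $L$ and $M$ to commute. This is the only structural input needed; everything downstream is elementary once commutativity is in hand.

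Next I would pin down the exponential of the rank-one part in isolation. Since $\ob{\e_a-\e_b}^T\ob{\e_a-\e_b}=2$, a one-line computation gives $M^2=2M$, and a routine induction yields $M^k=2^{k-1}M$ for every $k\geq 1$. Feeding this into the defining power series of the matrix exponential and summing the resulting scalar series $\sum_{k\geq 1}\frac{\ob{-2i\alpha t}^k}{k!}=\exp\ob{-2i\alpha t}-1$ produces the closed form $\exp\ob{-i\alpha tM}=I+\frac{1}{2}\ob{\exp\ob{-2i\alpha t}-1}M$.

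Then comes the factorization step. Because $L$ and $\alpha M$ commute, the matrix exponential splits as $\exp\ob{-it\ob{L+\alpha M}}=\exp\ob{-itL}\exp\ob{-i\alpha tM}$. Substituting the closed form obtained above and recognizing $U_L(t)=\exp\ob{-itL}$ gives precisely the claimed identity $U_{L^{\alpha}}(t)=U_{L}(t)\tb{I+\frac{1}{2}\ob{\exp\ob{-2i\alpha t}-1}M}$.

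I expect the only genuinely delicate point to be that factorization $\exp\ob{X+Y}=\exp\ob{X}\exp\ob{Y}$, which holds for matrices exactly when $X$ and $Y$ commute; for non-commuting arguments the Baker--Campbell--Hausdorff corrections would destroy the formula. This is why Lemma~\ref{lm1} is indispensable rather than cosmetic: the twin-vertex condition is precisely what licenses the split. The remaining work is just power-series bookkeeping, so I would keep the write-up short and lean entirely on the two prior computations.
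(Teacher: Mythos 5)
Your proposal is correct and follows essentially the same route as the paper: Lemma~\ref{lm1} supplies the commutativity of $L$ and $M$, the identity $M^k=2^{k-1}M$ collapses the power series to $\exp\ob{-i\alpha tM}=I+\frac{1}{2}\ob{\exp\ob{-2i\alpha t}-1}M$, and commutativity licenses the factorization $\exp\ob{-it\ob{L+\alpha M}}=\exp\ob{-itL}\exp\ob{-i\alpha tM}$. Your explicit remark that the split $\exp\ob{X+Y}=\exp\ob{X}\exp\ob{Y}$ requires commuting arguments is a point the paper leaves implicit, but the argument is otherwise identical.
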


Now we draw the following conclusions. The last part of the proof of Theorem \ref{tm2} uses a technique appears in \cite{god1}.
\begin{thm}\label{tm2}
Suppose the conditions of Proposition \ref{tm} is satisfied. If the unperturbed graph $G$ exhibits Laplacian perfect state transfer at time $\tau$ between the vertices $u_p$ and $u_q$, then so does the edge perturbed graph with Laplacian $L^{\alpha}$ provided one of the following holds:
\begin{enumerate}
\item $p,q\in\cb{a,b}$ with $\alpha\tau\in \pi\Zl$,\label{it:a}
\item $p,q\not\in\cb{a,b}$.\label{it:b}
\end{enumerate}
Moreover, if $p\in\cb{a,b}$ and $q\not\in\cb{a,b}$, then there exists no Laplacian perfect state transfer in the perturbed graph between $u_p$ and $u_q$.
\end{thm}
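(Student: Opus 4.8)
The plan is to read everything off the transition-matrix identity $U_{L^\alpha}(t)=U_L(t)\tb{I+\tfrac12\ob{\exp\ob{-2i\alpha t}-1}M}$ supplied by Proposition \ref{tm}, abbreviating $\beta(t):=\tfrac12\ob{\exp\ob{-2i\alpha t}-1}$. The hypothesis provides $U_L(\tau)\e_p=\gamma\e_q$ with $\mb\gamma=1$ (unitarity forces the modulus), and the only fact I need about $M=\ob{\e_a-\e_b}\ob{\e_a-\e_b}^T$ is that $M\e_p=\tb{\ob{\e_a-\e_b}^T\e_p}\ob{\e_a-\e_b}$, where the scalar $\ob{\e_a-\e_b}^T\e_p$ equals $1,-1,0$ according as $p=a$, $p=b$, or $p\notin\cb{a,b}$. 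For case \ref{it:b}, since $p\notin\cb{a,b}$ we get $M\e_p=0$, hence $U_{L^\alpha}(\tau)\e_p=U_L(\tau)\e_p=\gamma\e_q$, so the perturbed graph inherits LPST at the very same time $\tau$ for every $\alpha$.

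For case \ref{it:a}, by symmetry I take $p=a$, $q=b$. The twin condition yields the automorphism $P$ transposing $u_a,u_b$ and fixing the remaining vertices; because equal degrees and the common neighbourhood make $P$ fix the Laplacian, $P$ commutes with $U_L(\tau)$, and applying $P$ to $U_L(\tau)\e_a=\gamma\e_b$ produces the reverse transfer $U_L(\tau)\e_b=\gamma\e_a$. Consequently $U_L(\tau)\ob{\e_a-\e_b}=-\gamma\ob{\e_a-\e_b}$, so I would expand $U_{L^\alpha}(\tau)\e_a=U_L(\tau)\e_a+\beta(\tau)U_L(\tau)\ob{\e_a-\e_b}=\gamma\e_b-\gamma\beta(\tau)\ob{\e_a-\e_b}$. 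This is a scalar multiple of $\e_b$ exactly when its $\e_a$-coefficient $-\gamma\beta(\tau)$ vanishes, i.e. when $\beta(\tau)=0$; and $\beta(\tau)=0\iff\exp\ob{-2i\alpha\tau}=1\iff\alpha\tau\in\pi\Zl$, which is precisely the stated hypothesis. Under it $U_{L^\alpha}(\tau)\e_a=\gamma\e_b$, giving LPST (and the computation also shows the condition is sharp at time $\tau$).

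For the final negative statement I would reuse the same automorphism, now attached to the perturbed matrix. Since $P\ob{\e_a-\e_b}=-\ob{\e_a-\e_b}$, a one-line check gives $PMP=M$, hence $PL^\alpha P=L^\alpha$ and $P$ commutes with every $U_{L^\alpha}(t)$. Suppose, for contradiction, LPST held between $u_p$ (say $p=a$) and some $u_q$ with $q\notin\cb{a,b}$ at a time $\sigma$, so $U_{L^\alpha}(\sigma)\e_a=\gamma\e_q$. Applying $P$ and using that $P$ fixes $u_q$ yields $U_{L^\alpha}(\sigma)\e_b=\gamma\e_q$, whence $U_{L^\alpha}(\sigma)\ob{\e_a-\e_b}=0$. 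This contradicts invertibility of the unitary $U_{L^\alpha}(\sigma)$, so no such LPST exists; the subcase $p=b$ is identical, and the argument rules out transfer at \emph{any} time, not merely at $\tau$.

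The routine portions are the scalar bookkeeping for $M\e_p$ and the expansion of the bracket. The one genuinely load-bearing step is the passage from the forward transfer $U_L(\tau)\e_a=\gamma\e_b$ to the backward transfer $U_L(\tau)\e_b=\gamma\e_a$, which is exactly what turns $M\e_a$ into an eigen-type vector of $U_L(\tau)$ and lets the bracket collapse. I would therefore take care to justify the automorphism (equal degrees together with $N\ob{u_a}\setminus\cb{u_b}=N\ob{u_b}\setminus\cb{u_a}$) and its commuting with the relevant transition matrix, since both the affirmative case \ref{it:a} and the negative conclusion rest on that commutation.
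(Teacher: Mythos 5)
Your proof is correct, and it is built from the same three ingredients as the paper's: the factorization $U_{L^{\alpha}}(t)=U_{L}(t)\tb{I+\frac{1}{2}\ob{\exp\ob{-2i\alpha t}-1}M}$ from Proposition \ref{tm}, the transposition automorphism $P$ supplied by the twin condition, and unitarity. Part (2) is identical to the paper's. The two deviations are in routing, and both are sound. For part (1), the paper has a one-line argument you overlooked: $\alpha\tau\in\pi\Zl$ gives $\exp\ob{-2i\alpha\tau}=1$, so the bracket collapses to $I$ and $U_{L^{\alpha}}(\tau)=U_{L}(\tau)$ outright, with no need for the automorphism; your longer route via the backward transfer $U_{L}(\tau)\e_b=\gamma\e_a$ and the eigenrelation $U_{L}(\tau)\ob{\e_a-\e_b}=-\gamma\ob{\e_a-\e_b}$ is valid and buys something the paper does not claim, namely that $\alpha\tau\in\pi\Zl$ is also \emph{necessary} for transfer between $u_a$ and $u_b$ at that specific time $\tau$. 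For the negative part, the paper stays with the unperturbed walk: it uses $M\e_q=0$ to get $U_{L^{\alpha}}(t)\e_q=U_{L}(t)\e_q$ and the commutation of $P$ with $U_{L}(t)$ to equate the $(a,q)$ and $(b,q)$ entries of $U_{L^{\alpha}}(t)$, then invokes unitarity of the column; you instead verify $PMP=M$ (from $P\ob{\e_a-\e_b}=-\ob{\e_a-\e_b}$), hence $PL^{\alpha}P=L^{\alpha}$, so $P$ commutes with the perturbed walk itself, and derive the contradiction $U_{L^{\alpha}}(\sigma)\ob{\e_a-\e_b}=0$. This is essentially the paper's symmetry obstruction packaged at the level of $L^{\alpha}$ rather than $L$; it is marginally more self-contained, and like the paper's version it rules out transfer at every time, not just $\tau$.
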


\begin{proof}
By Proposition \ref{tm}, the transition matrix of the edge perturbed graph is
\[U_{L^{\alpha}}(t)=U_{L}(t)\tb{I+\frac{1}{2}\ob{\exp{\ob{-2i\alpha t}}-1}M},\]
where $M=\ob{\e_a-\e_b}\ob{\e_a-\e_b}^T$ and $\alpha\in\Rl$. Note that $\alpha\tau\in \pi\Zl$ implies $\exp{\ob{-2i\alpha \tau}}=1$, and therefore $U_{L^{\alpha}}(\tau)=U_{L}(\tau)$, which proves part (\ref{it:a}). In case $q\not\in\cb{a,b}$, we have $M\e_q=0$ and hence
\begin{eqnarray}\label{eq2}
U_{L^{\alpha}}(t)\e_q=U_{L}(t)\e_q.
\end{eqnarray}
This proves our claim in part (\ref{it:b}). Since $N\ob{u_a}\setminus\cb{u_b}=N\ob{u_b}\setminus\cb{u_a},$ there is an automorphism of $G$ swapping the vertices $u_a$ and $u_b$, and that fixing all other vertices. Suppose $P$ is the matrix of the automorphism then
\[P\e_a=\e_b \text{ and } P\e_q=\e_q,~q\not\in\cb{a,b}.\]
In this case $P$ commutes with the Laplacian matrix $L$. As $U_{L}(t)$ is a polynomial in $L$, the matrix $P$ commutes with $U_{L}(t)$ as well. Now using \eqref{eq2}, we conclude that
\[\e_a^T U_{L^{\alpha}}(t)\e_q=\e_a^T U_{L}(t)\e_q=\e_a^T U_{L}(t)P\e_q=\e_a^T PU_{L}(t)\e_q=\e_b^T U_{L^{\alpha}}(t)\e_q,~q\not\in\cb{a,b}.\]
Since $U_{L^{\alpha}}(t)$ is an unitary matrix and $a\neq b$, there is no LPST in the perturbed graph between $u_p$ and $u_q$ whenever $p\in\cb{a,b}$ and $q\not\in\cb{a,b}$.
\end{proof}

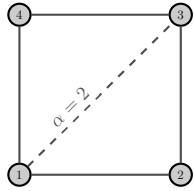
\begin{figure}[h]
\centering
\begin{tikzpicture}[scale=.3,auto=left, rotate=-45]
\tikzstyle{every node}=[circle, thick, fill=black!20, scale=0.4]

\def \r {5cm} %radius

 \node[draw] (1) at (-90+360*0/4:\r) {$1$};
 \node[draw] (2) at (-90+360*1/4:\r) {$2$};
 \node[draw] (3) at (-90+360*2/4:\r) {$3$};
 \node[draw] (4) at (-90+360*3/4:\r) {$4$};

  \draw[thick,black!70] (1)--(2)--(3)--(4)--(1);
  \draw[thick,dashed,black!70] (1) -- (3) node[midway, rectangle, fill=white, rotate=45, thick, scale=1.5] {$\alpha=2$};
 \end{tikzpicture}
\caption{The edge perturbed cycle $C_4$.}
\label{fg1}
\end{figure}

We know that the cycle $C_4$ exhibits PST at $\tau=\frac{\pi}{2}$ between every pair of antipodal vertices (see \cite{god1}). If we add an edge between such a pair with weight $\alpha=2$ (see Figure \ref{fg1}) then $\alpha\tau\in \pi\Zl$. Now Theorem \ref{tm2} implies that the perturbed graph also exhibits PST at $\frac{\pi}{2}$ between the same pair of vertices as $C_4$. Later we shall also use Theorem \ref{tm2} to find LPST in unweighted graphs. Next we register another conclusion deduced from Proposition \ref{tm}.

\begin{thm}\label{tm3}
Suppose the conditions of Proposition \ref{tm} is satisfied. Let the unperturbed graph is periodic at $u_p$ at time $\tau$. Then the following holds:
\begin{enumerate}
\item If $p\in\cb{a,b}$ with $2\alpha\tau\in \pi(2\Zl+1)$, then the edge perturbed graph exhibits Laplacian perfect state transfer between $u_a$ and $u_b$ at $\tau$.\label{it:c}
\item If $p\not\in\cb{a,b}$, then the edge perturbed graph is also periodic at the vertex $u_p$ at $\tau$.\label{it:d}
\end{enumerate}
 \end{thm}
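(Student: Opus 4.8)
The plan is to extract everything directly from the transition-matrix identity of Proposition \ref{tm}, namely
\[U_{L^{\alpha}}(t)=U_{L}(t)\tb{I+\frac{1}{2}\ob{\exp{\ob{-2i\alpha t}}-1}M},\]
evaluated at $t=\tau$, together with the elementary action of $M=\ob{\e_a-\e_b}\ob{\e_a-\e_b}^T$ on the standard basis vectors.

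I would dispatch part (\ref{it:d}) first, as it is immediate. For $p\not\in\cb{a,b}$ we have $\ob{\e_a-\e_b}^T\e_p=0$, hence $M\e_p=0$, and substituting into the identity gives $U_{L^{\alpha}}(t)\e_p=U_{L}(t)\e_p$ for every $t$. Taking $t=\tau$ and using that the unperturbed graph is periodic at $u_p$ yields $U_{L^{\alpha}}(\tau)\e_p=U_{L}(\tau)\e_p=\gamma\e_p$, so periodicity at $u_p$ survives the perturbation.

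For part (\ref{it:c}) I would take $p=a$ without loss of generality, by the symmetry of the hypothesis in $a$ and $b$. The first step is to promote periodicity at a single twin to periodicity at both with a common phase: the permutation matrix $P$ of the automorphism swapping $u_a,u_b$ and fixing all other vertices commutes with $L$, hence with the polynomial $U_{L}(\tau)$, so $U_{L}(\tau)\e_b=U_{L}(\tau)P\e_a=PU_{L}(\tau)\e_a=\gamma\e_b$. The second step is the computation $M\e_a=\e_a-\e_b$, which turns the bracketed operator applied to $\e_a$ into $\ob{1+c}\e_a-c\e_b$ with $c=\frac{1}{2}\ob{\exp{\ob{-2i\alpha\tau}}-1}$; applying $U_{L}(\tau)$ and invoking periodicity of both twins then gives $U_{L^{\alpha}}(\tau)\e_a=\gamma\tb{\ob{1+c}\e_a-c\e_b}$.

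The final step is the observation that the hypothesis $2\alpha\tau\in\pi\ob{2\Zl+1}$ is precisely $\exp{\ob{-2i\alpha\tau}}=-1$, i.e. $c=-1$, which kills the $\e_a$-coefficient and leaves $U_{L^{\alpha}}(\tau)\e_a=\gamma\e_b$ with $\mb{\gamma}=1$, exactly LPST between $u_a$ and $u_b$ at $\tau$. I do not anticipate a genuine obstacle: the one place demanding care is the phase-matching in the first step of part (\ref{it:c}), since without knowing that $u_b$ carries the same phase $\gamma$ as $u_a$ the two surviving terms would not collapse to a scalar multiple of $\e_b$, and this is exactly what the twin-vertex automorphism supplies.
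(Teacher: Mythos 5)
Your proof is correct, and part (\ref{it:d}) coincides with the paper's argument verbatim. For part (\ref{it:c}), however, you take a slightly different (and longer) route than the paper. The paper's proof applies the perturbed evolution to $\e_b$ rather than $\e_a$: since $\ob{I-M}\e_b=\e_a$, one gets immediately
\[U_{L^{\alpha}}(\tau)\e_b=U_{L}(\tau)\ob{I-M}\e_b=U_{L}(\tau)\e_a=\gamma\e_a,\]
which is LPST between $u_a$ and $u_b$ using periodicity at $u_a$ \emph{only} --- no automorphism is needed. You instead feed $\e_a$ into the identity, which forces you to know the phase of $U_{L}(\tau)$ at $u_b$, and you supply it via the twin-swapping permutation $P$ (the device the paper reserves for the no-transfer claim in Theorem \ref{tm2}): $U_{L}(\tau)\e_b=PU_{L}(\tau)\e_a=\gamma\e_b$. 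That step is valid, and your direction of transfer ($u_a$ to $u_b$ rather than $u_b$ to $u_a$) is equivalent to the paper's since $U_L(t)$ is complex symmetric, so the two statements agree. What your detour buys is the explicit extra fact that both twins are periodic at $\tau$ with a common phase; what the paper's choice buys is economy --- the clever evaluation at $\e_b$ makes the automorphism and the phase-matching entirely unnecessary.
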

\begin{proof}
Using Proposition \ref{tm}, we obtain
\[U_{L^{\alpha}}(t)=U_{L}(t)\tb{I+\frac{1}{2}\ob{\exp{\ob{-2i\alpha t}}-1}M},\]
where $M=\ob{\e_a-\e_b}\ob{\e_a-\e_b}^T$ and $\alpha\in\Rl$. If it satisfies $2\alpha\tau\in \pi(2\Zl+1)$ then $\exp{\ob{-2i\alpha \tau}}=-1$, and therefore
\[U_{L^{\alpha}}(\tau)=U_{L}(\tau)\ob{I-M}.\]
In part (\ref{it:c}), without loss of generality let $p=a$. Hence the following simplifies to
\[U_{L^{\alpha}}(\tau)\e_b=U_{L}(\tau)\ob{I-M} \e_b=U_{L}(\tau) \e_a.\]
If, in addition, the unperturbed graph is periodic at $u_a$ then it can be observed that the edge perturbed graph exhibits LPST between $u_a$ and $u_b$ at $\tau$. Finally, if $p\notin\cb{a,b}$, then we have $M\e_p=0$, and
\[U_{L^{\alpha}}(\tau)\e_p=U_{L}(\tau) \e_p.\]
Hence the claim in part (\ref{it:d}) is now follows.
\end{proof}

Consider the following corollary which generalizes the main result in \cite{bose1} on Laplacian perfect state transfer in complete graphs with a missing edge.

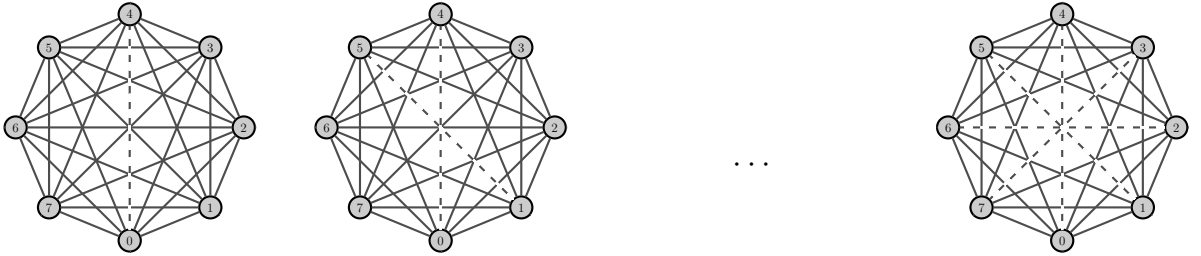
\begin{figure}[h]
\centering
\begin{multicols}{4}
\begin{tikzpicture}[scale=.3,auto=left]
\tikzstyle{every node}=[draw, circle, thick, fill=black!20, scale=0.4]

\def \r {5cm} %radius

 \node (0) at (-90+360*0/8:\r) {$0$};
 \node (1) at (-90+360*1/8:\r) {$1$};
 \node (2) at (-90+360*2/8:\r) {$2$};
 \node (3) at (-90+360*3/8:\r) {$3$};
 \node (4) at (-90+360*4/8:\r) {$4$};
 \node (5) at (-90+360*5/8:\r) {$5$};
 \node (6) at (-90+360*6/8:\r) {$6$};
 \node (7) at (-90+360*7/8:\r) {$7$};

\foreach \x in {0,...,7}{ 
 \foreach \y in {\x,...,7}{
  \draw[thick,black!70] (\x)--(\y);
  \draw[thick,dashed,white] (0) -- (4);

 }
 }
\end{tikzpicture}

\columnbreak

\begin{tikzpicture}[scale=.3,auto=left]
\tikzstyle{every node}=[draw, circle, thick, fill=black!20, scale=0.4]

\def \r {5cm} %radius

 \node (0) at (-90+360*0/8:\r) {$0$};
 \node (1) at (-90+360*1/8:\r) {$1$};
 \node (2) at (-90+360*2/8:\r) {$2$};
 \node (3) at (-90+360*3/8:\r) {$3$};
 \node (4) at (-90+360*4/8:\r) {$4$};
 \node (5) at (-90+360*5/8:\r) {$5$};
 \node (6) at (-90+360*6/8:\r) {$6$};
 \node (7) at (-90+360*7/8:\r) {$7$};

\foreach \x in {0,...,7}{ 
 \foreach \y in {\x,...,7}{
  \draw[thick,black!70] (\x)--(\y);
  \draw[thick,dashed,white] (0) -- (4);
  \draw[thick,dashed,white] (1) -- (5);
 }
 }
\end{tikzpicture}

\columnbreak

\[\cdots\]

\columnbreak
\begin{tikzpicture}[scale=.3,auto=left]
\tikzstyle{every node}=[draw, circle, thick, fill=black!20, scale=0.4]

\def \r {5cm} %radius

 \node (0) at (-90+360*0/8:\r) {$0$};
 \node (1) at (-90+360*1/8:\r) {$1$};
 \node (2) at (-90+360*2/8:\r) {$2$};
 \node (3) at (-90+360*3/8:\r) {$3$};
 \node (4) at (-90+360*4/8:\r) {$4$};
 \node (5) at (-90+360*5/8:\r) {$5$};
 \node (6) at (-90+360*6/8:\r) {$6$};
 \node (7) at (-90+360*7/8:\r) {$7$};

\foreach \x in {0,...,7}{ 
 \foreach \y in {\x,...,7}{
  \draw[thick,black!70] (\x)--(\y);
  \draw[thick,dashed,white] (0) -- (4);
  \draw[thick,dashed,white] (1) -- (5);
  \draw[thick,dashed,white] (2) -- (6);
  \draw[thick,dashed,white] (3) -- (7);
 }
 }
\end{tikzpicture}

\end{multicols}
\caption{The complete graph $K_8$ with disjoint edges removed.}
\label{fg2}
\end{figure}

\begin{cor}\label{cor1}
 The complete graph $K_{4n}$ on $4n$ vertices with a missing edge exhibits Laplacian perfect state transfer. Moreover, removal of any set of pairwise non-adjacent edges from $K_{4n}$ results Laplacian perfect state transfer at $\frac{\pi}{2}$ between the end vertices of every edge removed.
\end{cor}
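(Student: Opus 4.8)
The plan is to treat the single missing edge as the one-edge instance of the general matching statement, deriving the former directly from Theorem \ref{tm3} and the latter by extending the commuting-perturbation calculation behind Proposition \ref{tm}. First I would record two structural facts about $K_{4n}$. Since every pair of distinct vertices of a complete graph satisfies $N\ob{u_a}\setminus\cb{u_b}=N\ob{u_b}\setminus\cb{u_a}$, any pair of vertices is a pair of twin vertices, so Proposition \ref{tm} applies to each edge, and deleting an edge corresponds to the perturbation parameter $\alpha=-1$. From the spectral form \eqref{eq0} with the number of vertices equal to $4n$, I would compute
\[U_L\ob{\tfrac{\pi}{2}}=\tfrac{1}{4n}J+\exp\ob{-2\pi i n}\ob{I-\tfrac{1}{4n}J}=I,\]
so $K_{4n}$ is periodic at every vertex at $\tau=\frac{\pi}{2}$. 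For a single missing edge between $u_a$ and $u_b$, I would then invoke Theorem \ref{tm3} part (\ref{it:c}) with $p=a$, $\alpha=-1$, and $\tau=\frac{\pi}{2}$: the required condition $2\alpha\tau=-\pi\in\pi\ob{2\Zl+1}$ holds, which yields LPST between $u_a$ and $u_b$ at $\frac{\pi}{2}$.

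For the matching case, let the removed edges be $\cb{u_{a_j},u_{b_j}}$ for $1\le j\le m$, with all $2m$ endpoints distinct, and set $M_j=\ob{\e_{a_j}-\e_{b_j}}\ob{\e_{a_j}-\e_{b_j}}^T$, so that the resulting graph has Laplacian $L'=L-\sum_{j=1}^m M_j$. The heart of the argument is that $L,M_1,\ldots,M_m$ pairwise commute: each $M_j$ commutes with $L=4nI-J$ because $J\ob{\e_{a_j}-\e_{b_j}}=0$ forces $JM_j=M_jJ=0$ and hence $LM_j=4nM_j=M_jL$; and $M_jM_k=0$ for $j\ne k$ since the disjointness of the edges gives $\ob{\e_{a_j}-\e_{b_j}}^T\ob{\e_{a_k}-\e_{b_k}}=0$. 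Using the identity for $\exp\ob{-i\alpha t M_j}$ established just before Proposition \ref{tm} (here with $\alpha=-1$), together with the commutativity and the orthogonality relations $M_jM_k=0$, I would telescope the product of exponentials to
\[U_{L'}(t)=U_{L}(t)\tb{I+\tfrac{1}{2}\ob{\exp\ob{2it}-1}\sum_{j=1}^m M_j}.\]

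Finally I would evaluate at $\tau=\frac{\pi}{2}$. There $\exp\ob{2i\cdot\frac{\pi}{2}}=-1$, so the bracket reduces to $I-\sum_j M_j$, and combined with $U_L\ob{\frac{\pi}{2}}=I$ this gives $U_{L'}\ob{\frac{\pi}{2}}=I-\sum_{j=1}^m M_j$. Acting on $\e_{a_k}$ and using $M_j\e_{a_k}=0$ for $j\ne k$ and $M_k\e_{a_k}=\e_{a_k}-\e_{b_k}$, I obtain $U_{L'}\ob{\frac{\pi}{2}}\e_{a_k}=\e_{b_k}$, which is precisely LPST at $\frac{\pi}{2}$ between $u_{a_k}$ and $u_{b_k}$ for every removed edge. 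I expect the main obstacle to be the simultaneous handling of several perturbations, since Proposition \ref{tm} and Theorem \ref{tm3} are stated for a single edge; the crux is verifying that the family $\cb{M_j}$ is mutually orthogonal, a consequence of the edges being pairwise non-adjacent, as this is exactly what lets the product of exponentials collapse to a single summed perturbation and keeps each column $\e_{a_k}$ untouched by the remaining deletions.
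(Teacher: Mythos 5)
Your proposal is correct, and for the second (matching) assertion it takes a genuinely different route from the paper. For the single missing edge you argue exactly as the paper does: compute $U_L\ob{\frac{\pi}{2}}=I$ from \eqref{eq0}, note that deletion corresponds to $\alpha=-1$, and apply Theorem \ref{tm3} part (\ref{it:c}) with $2\alpha\tau=-\pi\in\pi\ob{2\Zl+1}$. For the matching, however, the paper proceeds \emph{iteratively}: it removes one edge at a time, invoking Theorem \ref{tm3} part (\ref{it:d}) to see that the perturbed graph stays periodic at the untouched vertices at $\frac{\pi}{2}$, then Theorem \ref{tm3} part (\ref{it:c}) to create LPST across the next deleted edge, and Theorem \ref{tm2} part (\ref{it:b}) to check that each new deletion does not disturb the LPST already established between earlier pairs; implicitly one must also track that each pair remains twins after the previous deletions (which holds, since a complete graph minus a partial matching still has $N\ob{x}\setminus\cb{y}=N\ob{y}\setminus\cb{x}$ for any unmatched-to-each-other pair). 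You instead handle all $m$ deletions \emph{simultaneously}: the pairwise orthogonality $M_jM_k=0$ for disjoint edges, together with $JM_j=M_jJ=0$ (hence $LM_j=4nM_j=M_jL$), lets you collapse the product of exponentials into the single closed form $U_{L'}(t)=U_L(t)\tb{I+\frac{1}{2}\ob{e^{2it}-1}\sum_j M_j}$, from which $U_{L'}\ob{\frac{\pi}{2}}=I-\sum_j M_j$ and then $U_{L'}\ob{\frac{\pi}{2}}\e_{a_k}=\e_{b_k}$ follow by direct evaluation; I verified the telescoping and the commutation claims, and they are sound. What each approach buys: the paper's induction is softer and reuses its two theorems verbatim with no new matrix computation, while your one-shot computation sidesteps the bookkeeping of the iteration (persistence of twin conditions, periodicity, and previously established LPST) and yields strictly more information --- an explicit transition matrix for the matching-deleted $K_{4n}$ at every time $t$, from which periodicity at the unmatched vertices at $\frac{\pi}{2}$ also drops out for free. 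As a minor point in your favor, your spectral evaluation writes the projection correctly as $\frac{1}{4n}J$, whereas the paper's displayed formula in its proof carries a typographical $\frac{1}{n}\J$ left over from \eqref{eq0}.
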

\begin{proof}
The transition matrix of $K_{4n}$, as given in \eqref{eq0}, is obtained by 
\[U_{L}(t)=\exp{\ob{-it L}}=\frac{1}{n}\J +\exp{\ob{-4int}}\ob{I-\frac{1}{n}\J}.\]
If an edge between two vertices $u$ and $v$ is deleted then the perturbed graph has the Laplacian $L^{\alpha}$ with $\alpha=-1$. Note that $K_{4n}$ is periodic with $\tau=\frac{\pi}{2}$. Since $K_{4n}$ is periodic at $u$ with $2\alpha\tau\in \pi(2\Zl+1)$, by Theorem \ref{tm3}, the edge deleted complete graph exhibits LPST between $u$ and $v$ at time $\tau$. Moreover, if $x$ and $y$ are two vertices distinct from both $u$ and $v$ then the perturbed graph is periodic at both $x$ and $y$ at $\tau$. In a succession, if the edge between $x$ and $y$ is now removed then again by Theorem \ref{tm3}, the resulting graph exhibits LPST between $x$ and $y$ at time $\tau$. Also Theorem \ref{tm2} implies that the deletion of the edge between $x$ and $y$ does not disturb LPST between $u$ and $v$ at time $\tau$. Continuing this process, we obtain the desired result as demonstrated in Figure \ref{fg2}.
\end{proof}

Recall that all Laplacian integral graphs are periodic at $\tau=2\pi.$ Assume that $G$ is a Laplacian integral graph having a pair of vertices $u$ and $v$ with $N\ob{u}\setminus\cb{v}=N\ob{v}\setminus\cb{u}$. Suppose we reset the edge weight between $u$ and $v$ to $\frac{1}{4}$. It means that, if the unperturbed graph has an edge between $u$ and $v$ then consider $\alpha=-\frac{3}{4}$, otherwise let $\alpha=\frac{1}{4}$. In any case, we have $2\alpha\tau\in \pi(2\Zl+1)$ and hence Theorem \ref{tm3} applies to have the following conclusion.

\begin{cor}
Suppose $G$ is a Laplacian integral graph having a pair of distinct vertices $u$ and $v$ with $N\ob{u}\setminus\cb{v}=N\ob{v}\setminus\cb{u}$. If the edge weight between $u$ and $v$ is set to $\frac{1}{4}$ then the edge perturbed graph exhibits Laplacian perfect state transfer between $u$ and $v$ at time $2\pi$. Moreover $G$ is periodic at rest of the vertices at $2\pi$.
\end{cor}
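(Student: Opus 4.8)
The plan is to recognize this corollary as a direct specialization of Theorem \ref{tm3}, with the distinguished vertices $u_a, u_b$ of that theorem played by the twin pair $u, v$. First I would verify that the hypotheses of Proposition \ref{tm} are met: the pair $u, v$ satisfies $N\ob{u}\setminus\cb{v} = N\ob{v}\setminus\cb{u}$ by assumption, so by Lemma \ref{lm1} the matrices $L$ and $M = \ob{\e_a - \e_b}\ob{\e_a - \e_b}^T$ commute, and the transition matrix of the perturbed graph factors exactly as in Proposition \ref{tm}. Since $G$ is Laplacian integral, every Laplacian eigenvalue is an integer, so $\exp\ob{-i\mu_j \cdot 2\pi} = 1$ for each $j$; by the spectral decomposition \eqref{eq01} this gives $U_L\ob{2\pi} = I$, whence $G$ is periodic at every vertex at $\tau = 2\pi$.

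The key computation is to pin down the perturbation parameter $\alpha$ and check the parity condition $2\alpha\tau \in \pi\ob{2\Zl + 1}$ demanded by part (\ref{it:c}) of Theorem \ref{tm3}. Resetting the edge weight between $u$ and $v$ to $\frac{1}{4}$ splits into two cases. If $G$ already carries the edge $uv$ (weight $1$), then $\alpha = \frac{1}{4} - 1 = -\frac{3}{4}$; if $uv$ is absent (weight $0$), then $\alpha = \frac{1}{4}$. In either case, with $\tau = 2\pi$ one computes $2\alpha\tau = 4\pi\alpha$, yielding $-3\pi$ and $\pi$ respectively, both of which are odd multiples of $\pi$ and hence lie in $\pi\ob{2\Zl + 1}$. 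Thus the parity condition holds uniformly, irrespective of whether the original edge is present.

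With periodicity at $u$ established and the parity condition verified, part (\ref{it:c}) of Theorem \ref{tm3} immediately delivers Laplacian perfect state transfer between $u$ and $v$ at time $2\pi$. For the remaining vertices, since $G$ is periodic at each $u_p$ with $p \not\in \cb{a,b}$ at $\tau = 2\pi$, part (\ref{it:d}) of the same theorem shows the perturbed graph stays periodic at $u_p$ at $2\pi$, which gives the ``moreover'' clause. I do not expect a genuine obstacle here, as Theorem \ref{tm3} carries all of the analytic weight; the only points demanding care are the casework determining $\alpha$ and the observation that perturbing solely the $u$--$v$ edge leaves the neighborhoods of all other vertices — and therefore the twin relation underpinning Lemma \ref{lm1} — untouched, so that Proposition \ref{tm} applies in both cases without modification.
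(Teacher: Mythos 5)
Your proposal is correct and follows essentially the same route as the paper: periodicity of the Laplacian integral graph at $\tau=2\pi$, the two-case determination $\alpha=-\frac{3}{4}$ or $\alpha=\frac{1}{4}$ giving $2\alpha\tau=-3\pi$ or $\pi$ in $\pi\left(2\Zl+1\right)$, and then both parts of Theorem \ref{tm3}. The only difference is that you spell out the verification of Proposition \ref{tm}'s hypotheses and $U_L\left(2\pi\right)=I$ explicitly, which the paper leaves implicit.
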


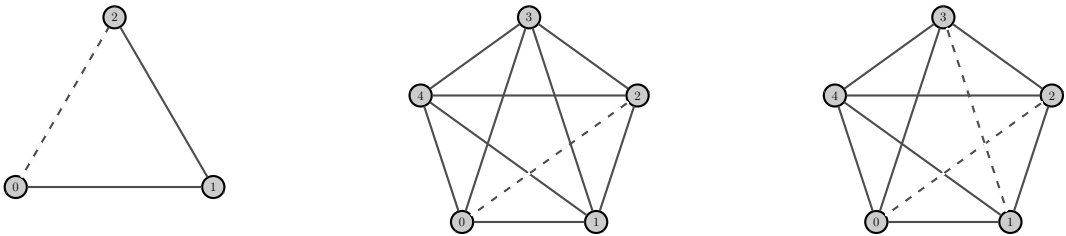
\begin{figure}[h]
\centering
\begin{multicols}{3}
\begin{tikzpicture}[scale=.3,auto=left, rotate=-60]
\tikzstyle{every node}=[draw, circle, thick, fill=black!20, scale=0.4]

\def \r {5cm} %radius

 \node (0) at (-90+360*0/3:\r) {$0$};
 \node (1) at (-90+360*1/3:\r) {$1$};
 \node (2) at (-90+360*2/3:\r) {$2$};

\foreach \x in {0,...,2}{ 
 \foreach \y in {\x,...,2}{
  \draw[thick,black!70] (\x)--(\y);
  \draw[thick,dashed,white] (0) -- (2);
 }
 }
\end{tikzpicture}

\columnbreak

\begin{tikzpicture}[scale=.3,auto=left, rotate=-36]
\tikzstyle{every node}=[draw, circle, thick, fill=black!20, scale=0.4]

\def \r {5cm} %radius

 \node (0) at (-90+360*0/5:\r) {$0$};
 \node (1) at (-90+360*1/5:\r) {$1$};
 \node (2) at (-90+360*2/5:\r) {$2$};
 \node (3) at (-90+360*3/5:\r) {$3$};
 \node (4) at (-90+360*4/5:\r) {$4$};

\foreach \x in {0,...,4}{ 
 \foreach \y in {\x,...,4}{
  \draw[thick,black!70] (\x)--(\y);
  \draw[thick,dashed,white] (0) -- (2);

 }
 }
\end{tikzpicture}

\columnbreak

\begin{tikzpicture}[scale=.3,auto=left,  rotate=-36]
\tikzstyle{every node}=[draw, circle, thick, fill=black!20, scale=0.4]

\def \r {5cm} %radius

 \node (0) at (-90+360*0/5:\r) {$0$};
 \node (1) at (-90+360*1/5:\r) {$1$};
 \node (2) at (-90+360*2/5:\r) {$2$};
 \node (3) at (-90+360*3/5:\r) {$3$};
 \node (4) at (-90+360*4/5:\r) {$4$};

\foreach \x in {0,...,4}{ 
 \foreach \y in {\x,...,4}{
  \draw[thick,black!70] (\x)--(\y);
  \draw[thick,dashed,white] (0) -- (2);
\draw[thick,dashed,white] (1) -- (3);
 }
 }
\end{tikzpicture}

\end{multicols}
\caption{The complete graphs $K_3$ and $K_5$ with disjoint edges perturbed.}
\label{fg3}
\end{figure}

It immediately follows that all non-trivial complete graphs exhibit LPST with a single edge perturbation. Further, repeated use of Theorem \ref{tm2} and Theorem \ref{tm3} imply that perturbing disjoint edges in a complete graph results LPST at $2\pi$ between the end vertices of each perturbed edge. In Figure \ref{fg3}, weights of each dashed edge in the complete graphs $K_3$ and $K_5$ is set to $\frac{1}{4}$ and LPST occurs between the end vertices of each dashed edge.

\section{LPGST on Edge Perturbed Graphs}

In the previous section, we have considered Laplacian state transfer between twin vertices in a graph, and found conditions under which these graphs exhibit perfect state transfer. The conclusions can further be generalized to have LPGST in certain graphs. The following inference are immediate from the proof of Theorem \ref{tm2}, and therefore we omit the proof for convenience.
\begin{thm}\label{tm4}
Suppose the conditions of Proposition \ref{tm} is satisfied. If the unperturbed graph $G$ exhibits Laplacian pretty good state transfer between the vertices $u_p$ and $u_q$ with respect to the sequence $\tau_k\in\Rl$, then so does the edge perturbed graph with Laplacian $L^{\alpha}$ provided one of the following holds:
\begin{enumerate}
\item $p,q\in\cb{a,b}$ with $\alpha\tau_k\in \pi\Zl$,
\item $p,q\not\in\cb{a,b}$.
\end{enumerate}
Moreover, if $p\in\cb{a,b}$ and $q\not\in\cb{a,b}$, then there exists no Laplacian pretty good state transfer in the perturbed graph between $u_p$ and $u_q$.
\end{thm}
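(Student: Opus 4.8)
The plan is to transcribe the proof of Theorem \ref{tm2} almost line for line, replacing the single transfer time $\tau$ by the approximating sequence $\tau_k$ and every exact matrix identity by one that is stable under $\lim_{k\to\infty}$. The only ingredient I need is the factorization from Proposition \ref{tm},
\[U_{L^{\alpha}}(t)=U_{L}(t)\tb{I+\frac{1}{2}\ob{\exp{\ob{-2i\alpha t}}-1}M},\qquad M=\ob{\e_a-\e_b}\ob{\e_a-\e_b}^T,\]
together with the elementary fact that $M\e_q=0$ whenever $q\notin\cb{a,b}$.

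The two affirmative cases are immediate. In part (1) the hypothesis $\alpha\tau_k\in\pi\Zl$ gives $\exp\ob{-2i\alpha\tau_k}=1$, so the bracket collapses to $I$ and $U_{L^{\alpha}}(\tau_k)=U_{L}(\tau_k)$ for every $k$; the defining limit $U_{L}(\tau_k)\e_p\to\gamma\e_q$ therefore holds verbatim for $U_{L^{\alpha}}$ along the same sequence. In part (2), $p,q\notin\cb{a,b}$ forces $M\e_p=0$, whence $U_{L^{\alpha}}(t)\e_p=U_{L}(t)\e_p$ for all $t$, and evaluating at $t=\tau_k$ and letting $k\to\infty$ transports the limit unchanged.

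The real content is the no-go assertion, which I would prove by contradiction. Fixing $q\notin\cb{a,b}$, the same automorphism computation used in Theorem \ref{tm2} (with the swap matrix $P$ that commutes with $U_{L^{\alpha}}(t)$) gives $\e_a^T U_{L^{\alpha}}(t)\e_q=\e_b^T U_{L^{\alpha}}(t)\e_q$ for all $t$; since $L^{\alpha}$ is real symmetric, $U_{L^{\alpha}}(t)$ is complex symmetric, so this can be rewritten as the coordinate identity $\ob{U_{L^{\alpha}}(t)\e_a}_q=\ob{U_{L^{\alpha}}(t)\e_b}_q$. Assuming, say, LPGST from $u_a$ to $u_q$ along a sequence $\tau_k$, I have $U_{L^{\alpha}}(\tau_k)\e_a\to\gamma\e_q$ with $\abs{\gamma}=1$ (unitarity forces the modulus), so the $q$-th coordinate of column $a$ tends to $\gamma$, and by the identity above so does the $q$-th coordinate of column $b$.

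The contradiction then comes from orthonormality of the columns of the unitary matrix $U_{L^{\alpha}}(\tau_k)$: the Hermitian inner product of its $a$-th and $b$-th columns equals $\e_a^T\e_b=0$ identically, yet when this inner product is written as a finite coordinate sum, every term with index $j\neq q$ carries a factor $\overline{\ob{U_{L^{\alpha}}(\tau_k)\e_a}_j}\to 0$ against a bounded factor and dies, while the $j=q$ term tends to $\bar\gamma\gamma=\abs{\gamma}^2=1$; hence $0\to 1$. This rules out transfer from $u_a$, and the symmetry in $a,b$ rules out transfer from $u_b$. I expect this last step to be the one genuine obstacle: the columns $U_{L^{\alpha}}(\tau_k)\e_b$ need not converge coordinatewise, so one cannot naively pass the limit through the sum, and the argument only closes because the vanishing coordinates of column $a$ annihilate all but the single convergent summand.
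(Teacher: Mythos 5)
Your proposal is correct and takes essentially the route the paper intends: the paper omits the proof of Theorem \ref{tm4} precisely because it is the proof of Theorem \ref{tm2} with $\tau$ replaced by the sequence $\tau_k$ and limits taken, and that is exactly what you do --- the factorization of Proposition \ref{tm}, the collapse of the bracket when $\alpha\tau_k\in\pi\Zl$, the identity $M\e_p=0$ for $p\notin\cb{a,b}$, and the automorphism identity $\e_a^T U_{L^{\alpha}}(t)\e_q=\e_b^T U_{L^{\alpha}}(t)\e_q$. Your concluding contradiction via orthogonality of columns $a$ and $b$ is valid (and you correctly flag why the limit interchange is harmless), but it is more elaborate than needed: since column $q$ of the unitary $U_{L^{\alpha}}(t)$ has unit norm, the automorphism identity already yields the uniform bound $\abs{\e_a^T U_{L^{\alpha}}(t)\e_q}\leq \frac{1}{\sqrt{2}}$ for all $t$, which rules out LPGST outright with no limiting argument at all.
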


Next we include another conclusion which can be deduced immediately from the proof of Theorem \ref{tm3}. 

\begin{thm}\label{tm5}
Suppose the conditions of Proposition \ref{tm} is satisfied. Let the unperturbed graph is almost periodic at $u_p$ with respect to the sequence $\tau_k\in\Rl$. Then the following holds:
\begin{enumerate}
\item If $p\in\cb{a,b}$ with $2\alpha\tau_k\in \pi(2\Zl+1)$, then the edge perturbed graph exhibits Laplacian pretty good state transfer between $u_a$ and $u_b$ with respect to $\tau_k$.
\item If $p\not\in\cb{a,b}$, then the edge perturbed graph is almost periodic at the vertex $u_p$ with respect to $\tau_k$.
\end{enumerate}
 \end{thm}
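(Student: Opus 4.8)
The plan is to transcribe the proof of Theorem \ref{tm3} along the approximating sequence $\tau_k$, converting each exact identity into a statement about limits. The decisive point is that the hypothesis $2\alpha\tau_k\in\pi(2\Zl+1)$ is imposed for every index $k$ rather than only asymptotically, so the algebraic simplification used in the periodic case is available term by term. Concretely, Proposition \ref{tm} gives
\[U_{L^{\alpha}}(\tau_k)=U_{L}(\tau_k)\tb{I+\frac{1}{2}\ob{\exp{\ob{-2i\alpha \tau_k}}-1}M},\]
and $2\alpha\tau_k\in\pi(2\Zl+1)$ forces $\exp{\ob{-2i\alpha\tau_k}}=-1$, whence $U_{L^{\alpha}}(\tau_k)=U_{L}(\tau_k)\ob{I-M}$ for each $k$.

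For part (1), assume without loss of generality that $p=a$. Since $M\e_b=\e_b-\e_a$, we get $\ob{I-M}\e_b=\e_a$, so that the exact identity $U_{L^{\alpha}}(\tau_k)\e_b=U_{L}(\tau_k)\e_a$ holds for every $k$. Passing to the limit and invoking almost periodicity of $G$ at $u_a$ yields
\[\lim\limits_{k\to\infty}U_{L^{\alpha}}(\tau_k)\e_b=\lim\limits_{k\to\infty}U_{L}(\tau_k)\e_a=\gamma\e_a\]
for some $\gamma\in\Cl$ with $\mb{\gamma}=1$, which is exactly LPGST from $u_b$ to $u_a$. To read this as state transfer between the pair in the symmetric sense, I would note that the twin condition is inherited by $L^{\alpha}$, so the permutation $P$ swapping $u_a,u_b$ and fixing the remaining vertices commutes with $L^{\alpha}$, and hence with the polynomial $U_{L^{\alpha}}(\tau_k)$; applying $P$ to the displayed limit then gives $U_{L^{\alpha}}(\tau_k)\e_a\to\gamma\e_b$ as well.

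For part (2), when $p\notin\cb{a,b}$ the relation $M\e_p=0$ gives $U_{L^{\alpha}}(\tau_k)\e_p=U_{L}(\tau_k)\e_p$ for every $k$, and taking limits together with almost periodicity of $G$ at $u_p$ produces $\lim\limits_{k\to\infty}U_{L^{\alpha}}(\tau_k)\e_p=\gamma\e_p$, i.e.\ almost periodicity of the perturbed graph at $u_p$ along $\tau_k$. I do not anticipate a genuine obstacle here: the lone subtlety is that the reduction $U_{L^{\alpha}}(\tau_k)=U_{L}(\tau_k)\ob{I-M}$ must be valid for each $k$ exactly—which it is, by the term-wise hypothesis on $\tau_k$—so that continuity of the linear maps $X\mapsto X\e_b$ and $X\mapsto X\e_p$ transports the limit from $U_{L}$ to $U_{L^{\alpha}}$ with no residual error term. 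This is precisely why the result is immediate from the proof of Theorem \ref{tm3}.
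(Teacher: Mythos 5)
Your proof is correct and follows exactly the route the paper intends: it transcribes the proof of Theorem \ref{tm3} along the sequence $\tau_k$, using Proposition \ref{tm} with $\exp\ob{-2i\alpha\tau_k}=-1$ to get $U_{L^{\alpha}}(\tau_k)=U_{L}(\tau_k)\ob{I-M}$ termwise and then passing to the limit, which is precisely why the paper omits the proof as ``immediate.'' Your added remark that the swap automorphism $P$ commutes with $L^{\alpha}$ (since $PMP=M$) to symmetrize the transfer from $u_b\to u_a$ to $u_a\to u_b$ is a harmless extra refinement beyond what the paper records.
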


Now we use Theorem \ref{tm4} and Theorem \ref{tm5} to find a class of edge perturbed circulant graphs exhibiting LPGST. Before that we revisit the proof of Theorem 2.8 in \cite{pal7} to establish the following result that plays a crucial role in classifying LPGST in circulant graphs. We use the resources as placed in \cite{pal7} and avoid unnecessary repetitions to prove the result. It is noteworthy that the continuous-time quantum walk relative to adjacency matrix has been considered in the following.

\begin{thm}\label{tm6}
Let $k\in\Nl$, $n=2^k$ and consider the circulant graph $Cay\left(\Zl_n,S\right).$ If each divisor $d$ of $n$ satisfies $\left|S\cap S_n(d)\right|\equiv 0\pmod{4}$ then $Cay\left(\Zl_n,S\right)$ is almost periodic with respect to a sequence in $\left(4\Zl+1\right)\frac{\pi}{2}$.
\end{thm}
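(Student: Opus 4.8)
The plan is to work entirely with the adjacency eigenvalues of $Cay(\Zl_n,S)$ and to reduce almost periodicity to a simultaneous Diophantine approximation problem that the Kronecker theorem can settle. First I would record the eigenvalues $\lambda_l=\sum_{s\in S}\omega_n^{ls}$ for $0\le l\le n-1$, which are real because $-S=S$, together with the spectral decomposition $U(t)=\sum_{\mu}\exp(-i\mu t)F_\mu$ over the finitely many \emph{distinct} eigenvalues $\mu$. Because the sum is finite, establishing almost periodicity with respect to a sequence $\tau_r\in(4\Zl+1)\frac{\pi}{2}$ is exactly the task of producing integers $m_r$ for which $\exp\!\big(-i\lambda_l(4m_r+1)\tfrac{\pi}{2}\big)\to\gamma$ for every $l$, with one common $\gamma$; then $U(\tau_r)\to\gamma\sum_\mu F_\mu=\gamma I$.

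The decisive simplification is the identity $(4m+1)\tfrac{\pi}{2}=2\pi m+\tfrac{\pi}{2}$, which factors the phase as $\exp\!\big(-i\lambda_l(4m+1)\tfrac{\pi}{2}\big)=\exp(-2\pi i m\lambda_l)\,\exp(-i\lambda_l\tfrac{\pi}{2})$. For an integer eigenvalue the first factor is $1$ for every $m$, so its phase is the fixed fourth root of unity $(-i)^{\lambda_l}$; choosing $\gamma=(-i)^{\lambda_0}$, where $\lambda_0=|S|$ is the degree, these all agree precisely when every integer eigenvalue is congruent to $\lambda_0$ modulo $4$. For an irrational eigenvalue I instead need $\exp(-2\pi i m_r\lambda_l)\to\exp\!\big(-2\pi i(\lambda_0-\lambda_l)/4\big)$, that is, the fractional parts $\{m_r\lambda_l\}$ must converge to the prescribed targets $\alpha_l=\frac{\lambda_0-\lambda_l}{4}\pmod 1$. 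Thus the entire statement reduces to realising the target vector $(\alpha_l)$ as a limit of $\big(\{m\lambda_l\}\big)$ along integers $m$.

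To realise the targets I would invoke the orbit-closure form of Kronecker's theorem: such $m_r$ exist if and only if the targets respect every $\Ql$-linear relation among the eigenvalues, i.e. whenever $\sum_l n_l\lambda_l\in\Zl$ with $n_l\in\Zl$ one has $\sum_l n_l\alpha_l\in\Zl$. Since $\alpha_l=\frac{\lambda_0-\lambda_l}{4}$, this is exactly the congruence
\[
(\star)\qquad \sum_l n_l(\lambda_0-\lambda_l)\equiv 0 \pmod 4 \quad\text{for every integer relation } \sum_l n_l\lambda_l\in\Zl,
\]
which contains the integer-eigenvalue requirement above as the one-term special case. Equivalently, after fixing a $\Ql$-basis $\{1,\theta_1,\dots,\theta_r\}$ of the real cyclotomic field $\Ql\!\big(\cos\tfrac{2\pi}{n}\big)$ generated by the $\lambda_l$ — whose generators are genuine algebraic numbers, so the algebraic form of Kronecker's theorem stated above applies to them directly — one rewrites each $\lambda_l$ in that basis and verifies that the rational parts are compatible with the $\tfrac14$-shifts. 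This is the point at which I would reuse the structural input from the proof of Theorem 2.8 in \cite{pal7}, namely the explicit relations among the cosine sums $2\cos\frac{2\pi ls}{n}$ for $n=2^k$, organised according to the $2$-adic valuation of $l$.

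The main obstacle is precisely the compatibility condition $(\star)$: showing that the hypothesis $|S\cap S_n(d)|\equiv 0\pmod 4$ for every divisor $d$ of $n=2^k$ forces it. One route is a $2$-adic divisibility analysis in $\Zl[\omega_n]$: writing $\lambda_0-\lambda_l=\sum_{s\in S}(1-\omega_n^{ls})$ and using that $v_{(1-\omega_n)}\!\big(1-\omega_n^{a}\big)=2^{v_2(a)}$, all elements $s$ of $S$ lying in a fixed class $S_n(2^j)$ contribute terms of one common valuation, and the assumption that each such class has cardinality divisible by $4$ supplies exactly the cancellation of low-order terms needed to push the valuation up to $2^k$ — the exponent for which $(1-\omega_n)^{2^k}$ generates the ideal $(4)$. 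Carrying out this cancellation bookkeeping, so that it delivers $(\star)$ for \emph{all} integer relations and not merely for single eigenvalues, is the delicate step; everything else is formal. Once $(\star)$ is secured, Kronecker's theorem yields infinitely many integers $m_r$, hence distinct nonzero times $\tau_r=(4m_r+1)\frac{\pi}{2}$ with $U(\tau_r)\to\gamma I$, which is the asserted almost periodicity.
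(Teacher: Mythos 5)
Your reduction is sound as far as it goes, and in fact it is a repackaging of the paper's own strategy: the paper also works at times $\ob{4q+1}\frac{\pi}{2}$ and applies Kronecker with targets $\alpha_l=-\lambda_l/4$, which is exactly your requirement that the fractional parts $\cb{m\lambda_l}$ approach the $\tfrac14$-shifts; your orbit-closure condition $(\star)$ is the correct abstract formulation of what must be checked. The genuine gap is the step you yourself flag as ``delicate bookkeeping'' and then only sketch: the $2$-adic mechanism you propose for verifying $(\star)$ is not merely incomplete, it is false. Take $n=16$ and $S=\cb{1,7,9,15}$, which satisfies the hypothesis ($\mb{S\cap S_{16}(1)}=4$, all other classes empty, and note $S=8-S$). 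Then $\theta_0=4$ while $\theta_2=2\ob{\omega_{16}^{2}+\omega_{16}^{14}}=2\sqrt{2}$, so $\theta_0-\theta_2=4-2\sqrt{2}=\ob{\sqrt{2}}^{3}\ob{\sqrt{2}-1}$. With $\pi=1-\omega_{16}$ one has $v_\pi(2)=8$, $v_\pi\ob{\sqrt2}=4$, and $\sqrt2-1$ is a unit, so $v_\pi\ob{4-2\sqrt2}=12<16=2^k$; equivalently $\ob{4-2\sqrt2}/4=1-\tfrac{1}{\sqrt2}$ is not an algebraic integer. So the class sums $\sum_{s\in S\cap S_n(d)}\ob{1-\omega_n^{ls}}$ are \emph{not} pushed into the ideal $(4)$: the divisibility-by-$4$ hypothesis does raise the valuation above the common per-term value $2^{j+v_2(l)}$, but generically not up to $2^k$, and your route to $(\star)$ collapses at precisely the point where all the content of the theorem sits.

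What actually makes $(\star)$ true --- and what the paper uses --- is the input you mention only in passing and never deploy: the $\Ql$-linear independence of $1$ together with the distinct positive eigenvalues $\lambda_j=2\cos\ob{2\pi j/n}$, $1\le j\le \frac{n}{4}-1$, of the cycle $C_n$ for $n=2^k$ (from the proof of Theorem 2.8 in the cited paper of the author). Writing $\theta_l=\frac12\sum_{s\in S}\lambda_{ls}$ and pairing $s$ with $n-s$ (legitimate since the hypothesis forces $\frac{n}{2}\notin S$), one gets $\theta_l=c_l+\sum_j a_{lj}\lambda_j$ with $a_{lj}\in\Zl$, where $c_l$ collects the integer contributions $\lambda_{ls}\in\cb{2,0,-2}$ from indices $ls\equiv 0,\pm\frac{n}{4},\frac{n}{2}\pmod n$; whether $s$ contributes $+1$, $0$ or $-1$ there depends only on its class $S_n(d)$, so $c_l$ is a signed sum of the class sizes $\mb{S\cap S_n(d)}$ and hence lies in $4\Zl$. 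Independence then pins down the relation lattice: $\sum_l n_l\theta_l\in\Zl$ forces $\sum_l n_l a_{lj}=0$ for every $j$, whence $\sum_l n_l\ob{\theta_0-\theta_l}=\sum_l n_l\ob{c_0-c_l}\equiv 0\pmod 4$, which is $(\star)$ (and, as a one-term case, the congruence $\theta_l\equiv\theta_0\equiv 0\pmod 4$ for integer eigenvalues that your choice of $\gamma$ needs). In short, the mod-$4$ hypothesis enters only through these integer contributions, not through any $2$-adic cancellation in $\Zl\tb{\omega_n}$; without the independence input your proof is unfinished, and with it it becomes the paper's proof.
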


\begin{proof}
Recall that the eigenvalues of the cycle $C_n$ are 
\[\lambda_l = 2\cos{\left(\frac{2l\pi}{n}\right)},~l=0,1,\ldots, n-1.\]
Now the distinct positive eigenvalues of $C_n$ are linearly independent over $\Ql$ as appears in the proof of Theorem 2.8 in \cite{pal7}. For $ 1\leq l\leq \frac{n}{4}-1$, let us choose $\alpha_l=-\frac{\lambda_l}{4}$. By the Kronecker approximation theorem, for $\delta>0$ there exist $q,m_1,\ldots,m_{\frac{n}{4}-1}\in\Zl$ such that for $l=1,\ldots, \frac{n}{4}-1$
\begin{eqnarray}\label{T4E1}
\left|q\lambda_l-m_l-\alpha_l\right|<\frac{\delta}{2n\pi},\; \emph{i.e,}\;\left|\left((4q+1)\frac{\pi}{2}\right)\lambda_l-2m_l\pi\right|<\frac{\delta}{n}.
\end{eqnarray} 
Note that $\lambda_l=-\lambda_{\frac{n}{2}-l}=-\lambda_{\frac{n}{2}+l}=\lambda_{n-l}$ holds for $ 1\leq l\leq \frac{n}{4}-1$, and 
\[\lambda_0=2,~\lambda_{\frac{n}{4}}=0,~\lambda_{\frac{n}{2}}=-2 \text{ and } \lambda_{\frac{3n}{4}}=0.\]
%Since $\lambda_0,\lambda_{\frac{n}{4}},\lambda_{\frac{n}{2}}$ and $\lambda_{\frac{3n}{4}}$ are even integers, considering $t=\left(4q+1\right)\frac{\pi}{2}$, we observe that for each $l=0,\;\frac{n}{4},\;\frac{n}{2},\;\frac{3n}{4}$ there exists an integer $l'$ such that
%\[\left|\lambda_l t- l'\pi\right|<\frac{\delta}{n}.\]
%Since $\lambda_l=-\lambda_{\frac{n}{2}-l}=-\lambda_{\frac{n}{2}+l}=\lambda_{n-l}$ holds for $ 1\leq l\leq \frac{n}{4}-1$, considering $t=\left(4q+1\right)\frac{\pi}{2}$ and using Equation \ref{T4E1}, we conclude that for each $l=0,\ldots, n-1,$ there is an integer $l'$ such that
%\begin{eqnarray}\label{T4E2}
%\left|\lambda_l t- l'\pi\right|<\frac{\delta}{n}.
%\end{eqnarray}
If $\theta_l$ denotes the eigenvalues of $Cay\left(\Zl_n,S\right)$ then
\[\theta_l=\frac{1}{2}\sum\limits_{s\in S}\lambda_{ls}=\sum\limits_{d\mid n}\frac{1}{2}\tb{\sum\limits_{s\in S\cap S_n(d)}\lambda_{ls}}.\]
Since each divisor $d~\left(\neq \frac{n}{2},\frac{n}{4}\right)$ of $n$ satisfy $\left|S\cap S_n(d)\right|\equiv 0\pmod{4}$, using Equation \ref{T4E1} and the triangle inequality, we obtain that for $\delta>0$ there exists $t\in\frac{\pi}{2}\left(4\Zl+1\right)$ so that for each $l$ there exists an integer $l''$ such that 
\begin{eqnarray*}
\left|\theta_l t- 2l''\pi\right|<\delta.
\end{eqnarray*}
The uniform continuity of exponential function implies that for $\epsilon>0$, there exists $t\in\frac{\pi}{2}\left(4\Zl+1\right)$ such that $\left|\exp{\left[-i\theta_l t\right]}- 1\right|<\epsilon.$ If $H_{S}(t)$ is the transition matrix relative to the adjacency matrix of $Cay\left(\Zl_n,S\right)$ then we obtain the following as given in the proof of Theorem 2.8 in \cite{pal7}.
\begin{eqnarray*}
\left|\left[H_{S}(t)\right]_{0,0}-1\right|=\frac{1}{n}\left|\sum\limits_{l=0}^{n-1}\left(\exp{\left[-i\theta_l t\right]}-1\right)\right|<\epsilon.
\end{eqnarray*}
Hence $Cay\left(\Zl_n,S\right)$ is almost periodic with respect to a sequence in $\frac{\pi}{2}\left(4\Zl+1\right)$ as concluded in the proof of Theorem 2.8 in \cite{pal7}.
\end{proof}
Recall that all circulant graphs are regular, and therefore Theorem \ref{tm6} also holds true when considering continuous-time quantum walk relative to the Laplacian matrix. Now suppose $k\in\Nl,$ $n=2^k$, and consider a circulant graph $Cay\left(\Zl_n,S\right)$. It can be observed that the vertices $0$ and $\frac{n}{2}$ are twin vertices of $Cay\left(\Zl_n,S\right)$ if and only if $S=\frac{n}{2}-S$. In fact, if $S=\frac{n}{2}-S$ then for each $x\in\Zl_n$ the pair of vertices $x$ and $\frac{n}{2}+x$ are twin vertices of $Cay\left(\Zl_n,S\right)$. Therefore the following can be obtained as a corollary of Theorem \ref{tm5}.

\begin{cor}\label{tm5c1}
Let $k\in\Nl$, $n=2^k$ and consider a circulant graph $Cay\left(\Zl_n,S\right).$ Suppose $S=\frac{n}{2}-S$ and each divisor $d$ of $n$ satisfies $\left|S\cap S_n(d)\right|\equiv 0\pmod{4}.$ If a new edge is added between a pair of twin vertices in $Cay\left(\Zl_n,S\right)$ then the resulting graph exhibits Laplacian pretty good state transfer between the end vertices of the newly added edge with respect to a sequence $\tau_k\in\frac{\pi}{2}\left(4\Zl+1\right).$ Moreover, the perturbed graph is almost periodic at the remaining vertices with respect to $\tau_k$.
\end{cor}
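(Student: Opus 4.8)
The plan is to obtain the corollary as a direct application of Theorem \ref{tm5}, drawing the required almost periodicity of the unperturbed graph from Theorem \ref{tm6}. First I would invoke Theorem \ref{tm6}: since $n=2^k$ and every divisor $d$ of $n$ satisfies $\left|S\cap S_n(d)\right|\equiv 0\pmod 4$, the graph $Cay\left(\Zl_n,S\right)$ is almost periodic with respect to a sequence $\tau_k\in\frac{\pi}{2}\left(4\Zl+1\right)$. Because circulant graphs are regular, this almost periodicity carries over verbatim to the Laplacian walk, so $U_L\ob{\tau_k}\to\gamma I$ and hence $G$ is almost periodic at \emph{every} vertex with respect to the same sequence $\tau_k$.

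Next I would verify the hypotheses of Proposition \ref{tm} for the chosen twin pair and pin down the perturbation parameter. Fix a twin pair $u_a,u_b$ with $a=x$ and $b=\frac{n}{2}+x$; the assumption $S=\frac{n}{2}-S$ guarantees, as already observed in the text, that these are twin vertices, so $N\ob{u_a}\setminus\cb{u_b}=N\ob{u_b}\setminus\cb{u_a}$ and Proposition \ref{tm} applies. A small but essential consistency check is that inserting a genuinely new edge is legitimate: taking $d=\frac{n}{2}$, the set $S_n\ob{\frac{n}{2}}=\cb{\frac{n}{2}}$ is a singleton, so the divisibility condition $\left|S\cap S_n\ob{\frac{n}{2}}\right|\equiv 0\pmod 4$ forces $\frac{n}{2}\notin S$, i.e. the twins are non-adjacent. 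Adding the new edge is therefore precisely the perturbation $L^{\alpha}$ with $\alpha=1$.

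The crux of the argument is the arithmetic matching of the two sequences. With $\alpha=1$ and $\tau_k=\frac{\pi}{2}\ob{4m+1}$ for some $m\in\Zl$, I compute $2\alpha\tau_k=2\tau_k=\ob{4m+1}\pi$, which is an odd multiple of $\pi$ and hence lies in $\pi\ob{2\Zl+1}$; this is exactly the hypothesis $2\alpha\tau_k\in\pi\ob{2\Zl+1}$ demanded by Theorem \ref{tm5}. Applying part (1) of Theorem \ref{tm5} with $p=a\in\cb{a,b}$ then yields Laplacian pretty good state transfer between $u_a$ and $u_b$ along $\tau_k$, while applying part (2) at each vertex $u_p$ with $p\notin\cb{a,b}$ — legitimate since $G$ is almost periodic at every such vertex — yields almost periodicity of the perturbed graph at the remaining vertices. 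The only genuine obstacle is the residue bookkeeping just performed: one must confirm that the sequence produced by Theorem \ref{tm6} lands in the odd-multiple-of-$\pi$ class that activates the state-transfer branch of Theorem \ref{tm5}, and once $\alpha=1$ is fixed this is immediate from $\tau_k\in\frac{\pi}{2}\ob{4\Zl+1}$.
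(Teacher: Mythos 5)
Your proof is correct and takes essentially the same route as the paper's: both invoke Theorem \ref{tm6} for almost periodicity of the unperturbed graph along a sequence in $\frac{\pi}{2}\left(4\Zl+1\right)$ (transferred to the Laplacian walk via regularity), take $\alpha=1$, observe $2\alpha\tau_k\in\pi\left(2\Zl+1\right)$, and apply the two parts of Theorem \ref{tm5}. The only cosmetic difference is your non-adjacency check, where you derive $\frac{n}{2}\notin S$ from $\left|S\cap S_n\left(\frac{n}{2}\right)\right|\equiv 0\pmod{4}$, whereas the paper deduces it from $S=\frac{n}{2}-S$ together with $0\notin S$; both are valid.
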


\begin{proof}
Since $S=\frac{n}{2}-S$ and $0\notin S$ the vertices $0$ and $\frac{n}{2}$ are not adjacent in $Cay\left(\Zl_n,S\right).$ Without loss of generality, let a new edge is added between the twin vertices $0$ and $\frac{n}{2}$ in $Cay\left(\Zl_n,S\right).$ By Theorem \ref{tm6}, the graph $Cay\left(\Zl_n,S\right)$ is almost periodic with respect to a sequence $\tau_k\in \frac{\pi}{2}\left(4\Zl+1\right).$ Note that the conditions of Theorem \ref{tm5} applies to $Cay\left(\Zl_n,S\right)$ with $u_a=0,~u_b=\frac{n}{2}$ and $\alpha=1$. Hence the perturbed graph admits LPGST between $u_a=0$ and $u_b=\frac{n}{2}$ with respect to $\tau_k$, and it is almost periodic at the remaining vertices with respect to the same sequence $\tau_k$.
\end{proof}

It is now evident from Theorem \ref{tm4} that new edges can be added successively between the twin vertices in $Cay\left(\Zl_n,S\right)$ to obtain more pair of vertices exhibiting LPGST. We illustrate this with the following example.
\begin{figure}[h]
\centering
\begin{multicols}{4}
\begin{tikzpicture}[scale=.3,auto=left]
\tikzstyle{every node}=[draw, circle, thick, fill=black!20, scale=0.4]

\def \r {5cm} %radius

 \node (0) at (-90+360*0/8:\r) {$0$};
 \node (1) at (-90+360*1/8:\r) {$1$};
 \node (2) at (-90+360*2/8:\r) {$2$};
 \node (3) at (-90+360*3/8:\r) {$3$};
 \node (4) at (-90+360*4/8:\r) {$4$};
 \node (5) at (-90+360*5/8:\r) {$5$};
 \node (6) at (-90+360*6/8:\r) {$6$};
 \node (7) at (-90+360*7/8:\r) {$7$};

 \foreach \x in {1,3,5,7}{
  \draw[thick,black!70] (0)--(\x);
 }

 \foreach \x in {2,4,6}{
  \draw[thick,black!70] (1)--(\x);
 }
 
  \foreach \x in {3,5,7}{
  \draw[thick,black!70] (2)--(\x);
 }

  \foreach \x in {4,6}{
  \draw[thick,black!70] (3)--(\x);
 }

  \foreach \x in {5,7}{
  \draw[thick,black!70] (4)--(\x);
 }

  \draw[thick,black!70] (5)--(6); 
  \draw[thick,black!70] (6)--(7);

  \draw[thick,dashed] (0) -- (4);

\end{tikzpicture}

\columnbreak

\begin{tikzpicture}[scale=.3,auto=left]
\tikzstyle{every node}=[draw, circle, thick, fill=black!20, scale=0.4]

\def \r {5cm} %radius

 \node (0) at (-90+360*0/8:\r) {$0$};
 \node (1) at (-90+360*1/8:\r) {$1$};
 \node (2) at (-90+360*2/8:\r) {$2$};
 \node (3) at (-90+360*3/8:\r) {$3$};
 \node (4) at (-90+360*4/8:\r) {$4$};
 \node (5) at (-90+360*5/8:\r) {$5$};
 \node (6) at (-90+360*6/8:\r) {$6$};
 \node (7) at (-90+360*7/8:\r) {$7$};

 \foreach \x in {1,3,5,7}{
  \draw[thick,black!70] (0)--(\x);
 }

 \foreach \x in {2,4,6}{
  \draw[thick,black!70] (1)--(\x);
 }
 
  \foreach \x in {3,5,7}{
  \draw[thick,black!70] (2)--(\x);
 }

  \foreach \x in {4,6}{
  \draw[thick,black!70] (3)--(\x);
 }

  \foreach \x in {5,7}{
  \draw[thick,black!70] (4)--(\x);
 }

  \draw[thick,black!70] (5)--(6); 
  \draw[thick,black!70] (6)--(7);

  \draw[thick,dashed] (0) -- (4);
  \draw[thick,dashed] (1) -- (5);

\end{tikzpicture}

\columnbreak

\[\cdots\]

\columnbreak

\begin{tikzpicture}[scale=.3,auto=left]
\tikzstyle{every node}=[draw, circle, thick, fill=black!20, scale=0.4]

\def \r {5cm} %radius

 \node (0) at (-90+360*0/8:\r) {$0$};
 \node (1) at (-90+360*1/8:\r) {$1$};
 \node (2) at (-90+360*2/8:\r) {$2$};
 \node (3) at (-90+360*3/8:\r) {$3$};
 \node (4) at (-90+360*4/8:\r) {$4$};
 \node (5) at (-90+360*5/8:\r) {$5$};
 \node (6) at (-90+360*6/8:\r) {$6$};
 \node (7) at (-90+360*7/8:\r) {$7$};

 \foreach \x in {1,3,5,7}{
  \draw[thick,black!70] (0)--(\x);
 }

 \foreach \x in {2,4,6}{
  \draw[thick,black!70] (1)--(\x);
 }
 
  \foreach \x in {3,5,7}{
  \draw[thick,black!70] (2)--(\x);
 }

  \foreach \x in {4,6}{
  \draw[thick,black!70] (3)--(\x);
 }

  \foreach \x in {5,7}{
  \draw[thick,black!70] (4)--(\x);
 }

  \draw[thick,black!70] (5)--(6); 
  \draw[thick,black!70] (6)--(7);

  \draw[thick,dashed] (0) -- (4);
  \draw[thick,dashed] (1) -- (5);
  \draw[thick,dashed] (2) -- (6);
  \draw[thick,dashed] (3) -- (7);

\end{tikzpicture}

\end{multicols}
\caption{Edges perturbed circulant graph $Cay\left(\Zl_n,S\right)$ with $S=\cb{1,3,5,7}.$}
\label{fg4}
\end{figure}

Suppose $n=8$, $S=\cb{1,3,5,7}$ and consider the circulant graph $G=Cay\left(\Zl_n,S\right).$ Note that for $x\in\Zl_n$, the pair $x$ and $x+\frac{n}{2}$ are twin vertices in $G$. Now if the vertices $0$ and $\frac{n}{2}$ are joined by a new edge then by Corollary \ref{tm5c1}, the perturbed graph exhibits LPGST between $0$ and $\frac{n}{2}$ with respect to a sequence $\tau_k\in \frac{\pi}{2}\left(4\Zl+1\right)$, and it is almost periodic at the remaining vertices with respect to the same sequence $\tau_k$. Now if an another edge is added to the perturbed graph between $1$ and $1+\frac{n}{2}$ then by Theorem \ref{tm4}, the resulting graph exhibits LPGST between the pair of vertices $0$ and $\frac{n}{2}$ as well as $1$ and $1+\frac{n}{2}$ (see Figure \ref{fg4}). Likewise new edges can further be added to obtain more pair of vertices exhibiting LPGST. Although all circulant graphs satisfying conditions of Corollary \ref{tm5c1} need not be integral, Theorem \ref{so} ensures that $G$ is an integral graph as $S=S_8(1)$, and hence $G$ is periodic at $2\pi.$ Now $G$ is almost periodic with respect to $\tau_k\in \frac{\pi}{2}\left(4\Zl+1\right)$, which implies that $\tau_k$ can be chosen to be the constant sequence $\tau_k=\frac{\pi}{2}.$ Hence all those perturbed graphs obtained from $G$ exhibit Laplacian perfect state transfer at time $\frac{\pi}{2}.$ We may generalize this to all integral circulant graphs satisfying the conditions of Corollary \ref{tm5c1}.

\begin{cor}\label{tm5c2}
Let $k\in\Nl$, $n=2^k$ and consider an integral circulant graph $Cay\left(\Zl_n,S\right).$ Suppose $S=\frac{n}{2}-S$ and each divisor $d$ of $n$ satisfies $\left|S\cap S_n(d)\right|\equiv 0\pmod{4}.$ If a new edge is added between a pair of twin vertices in $Cay\left(\Zl_n,S\right)$ then the resulting graph exhibits Laplacian perfect state transfer between the end vertices of the newly added edge at $\frac{\pi}{2}.$ Moreover, the perturbed graph is periodic at the remaining vertices at $\frac{\pi}{2}$.
\end{cor}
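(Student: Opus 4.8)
The plan is to show that the unperturbed circulant graph $G=Cay\ob{\Zl_n,S}$ is in fact \emph{periodic} at the single time $\frac{\pi}{2}$, and then invoke Theorem \ref{tm3} directly. First I would collect the two facts we already have about $G$ on its own. Since $G$ is integral and regular, its Laplacian eigenvalues $\mu_j=\mb{S}-\theta_j$ are integers, so $G$ is Laplacian integral; consequently $U_L\ob{2\pi}=\sum_j\exp\ob{-i\mu_j 2\pi}E_j=I$, and $U_L$ has the genuine period $2\pi$, i.e.\ $U_L\ob{t+2\pi}=U_L\ob{t}$ for all $t$. On the other hand, the hypotheses $S=\frac{n}{2}-S$ and $\mb{S\cap S_n(d)}\equiv 0\pmod{4}$ are exactly those of Theorem \ref{tm6}; because $G$ is regular this theorem applies to the Laplacian walk as well and supplies a sequence $\tau_k=\frac{\pi}{2}\ob{4m_k+1}\in\frac{\pi}{2}\ob{4\Zl+1}$ along which $G$ is almost periodic, $U_L\ob{\tau_k}\to\gamma I$.

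The decisive step is to collapse this sequence to the constant time $\frac{\pi}{2}$. Writing $\tau_k=2\pi m_k+\frac{\pi}{2}$ and applying the exact $2\pi$-periodicity, every term satisfies $U_L\ob{\tau_k}=U_L\ob{\frac{\pi}{2}}$; thus the convergent sequence is actually constant, and its limit forces $U_L\ob{\frac{\pi}{2}}=\gamma I$. Hence $G$ is periodic at $\frac{\pi}{2}$ at every vertex. I expect this to be the one genuinely load-bearing point, and the only place where integrality is used essentially: almost periodicity alone gives transfer only in the limit, but the exact period $2\pi$ lets the walk revisit $U_L\ob{\frac{\pi}{2}}$ at each $\tau_k$, upgrading LPGST to honest LPST.

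With periodicity of $G$ at $\frac{\pi}{2}$ established, the conclusion is a direct application of Theorem \ref{tm3}. Taking the twin vertices $u_a=0$, $u_b=\frac{n}{2}$, the relations $S=\frac{n}{2}-S$ and $0\notin S$ show they are non-adjacent, so inserting the new edge is the perturbation $L^{\alpha}$ with $\alpha=1$; then $2\alpha\tau=2\cdot 1\cdot\frac{\pi}{2}=\pi\in\pi\ob{2\Zl+1}$, and part (\ref{it:c}) of Theorem \ref{tm3} delivers LPST between $u_a$ and $u_b$ at $\frac{\pi}{2}$. For any vertex $u_p$ with $p\notin\cb{a,b}$, periodicity of $G$ at $u_p$ at $\frac{\pi}{2}$ together with part (\ref{it:d}) of Theorem \ref{tm3} shows the perturbed graph is again periodic at $u_p$ at $\frac{\pi}{2}$. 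Since for every $x\in\Zl_n$ the pair $x,\,x+\frac{n}{2}$ is likewise a pair of twins, the same argument gives the stated result for an edge added between any twin pair, completing the plan.
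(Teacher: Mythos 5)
Your proposal is correct and follows essentially the same route as the paper: Laplacian integrality gives $U_L(t+2\pi)=U_L(t)$, so the almost-periodicity sequence $\tau_k\in\frac{\pi}{2}\left(4\Zl+1\right)$ from Theorem \ref{tm6} collapses to the constant time $\frac{\pi}{2}$, making $G$ genuinely periodic there, after which Theorem \ref{tm3} with $\alpha=1$ and $2\alpha\tau=\pi$ yields LPST between the twins and periodicity at the remaining vertices. This is exactly the argument the paper sketches in the discussion preceding the corollary (there phrased as choosing $\tau_k$ to be the constant sequence $\frac{\pi}{2}$ and invoking the Theorem \ref{tm5}/Corollary \ref{tm5c1} machinery), including your observation that $S=\frac{n}{2}-S$ and $0\notin S$ force the twin pair to be non-adjacent.
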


As in case of the circulant graph $Cay\left(\Zl_n,S\right)$ with $S=\cb{1,3,5,7},$ we may now successively add edges in $Cay\left(\Zl_n,S\right)$, which satisfy the conditions of Corollary \ref{tm5c2}, to have more pair of twin vertices exhibiting LPST at $\frac{\pi}{2}$ in the edge perturbed graph.

%\section*{Acknowledgement}
%
%The author (Hiranmoy Pal) is highly grateful to National Institute of Technology Rourkela for the Seed Money Grant to initiate the project.

%%%%%%% THE BIBLIOGRAPHY %%%%%%%
\bibliographystyle{abbrv}
\bibliography{edge_perturbation}

\end{document}